\newtheorem*{rep@theorem}{\rep@title}
\newcommand{\newreptheorem}[2]{%
\newenvironment{rep#1}[1]{%
 \def\rep@title{#2 \hyperref[##1]{\ref*{thrm:main}.\ref*{##1}}}%
 \begin{rep@theorem}}%
 {\end{rep@theorem}}}
\newcommand{\newrepdoubletheorem}[2]{%
\newenvironment{repdouble#1}[1]{%
 \def\rep@title{#2 \hyperref[##1]{\ref*{thrm:main}.iii.\ref*{##1}}}%
 \begin{rep@theorem}}%
 {\end{rep@theorem}}}
\newcommand{\newrepintrotheorem}[2]{%
\newenvironment{repintro#1}[1]{%
 \def\rep@title{#2 \hyperref[##1]{\ref*{##1}}}%
 \begin{rep@theorem}}%
 {\end{rep@theorem}}}
\title{Shifted and threshold matroids}
\author{Ethan Partida}
\address{Department of Mathematics, Brown University, Box 1917, Providence, RI 02912}
\email{ethan\_partida@brown.edu}
\theoremstyle{definition}
\newtheorem{defn}{Definition}[section]
\newtheorem{remark}[defn]{Remark}
\newtheorem{example}[defn]{Example}
\theoremstyle{plain}
\newtheorem{lemma}[defn]{Lemma}
\newtheorem{prop}[defn]{Proposition}
\newtheorem{corollary}[defn]{Corollary}
\newtheorem{exthrm}[defn]{Theorem/Counterexample}
\newtheorem{thrm}[defn]{Theorem}
\newtheorem{question}[defn]{Question}
\newtheorem{mainthrm}{Theorem}
\newcommand{\R}{\mathbb{R}}
\begin{document}
\begin{abstract}
We characterize the class of threshold matroids by the structure of their defining bases. We also give an example of a shifted matroid which is not threshold, answering a question of Deza and Onn. We conclude by exploring consequences of our characterization of threshold matroids: We give a formula for the number of isomorphism classes of threshold matroids on a ground set of size $n$. This enumeration shows that almost all shifted matroids are not threshold. We also present a polynomial-time algorithm to check if a matroid is threshold and provide alternative and simplified proofs of some of the main results of Deza and Onn.
\end{abstract}
\maketitle
\section{Introduction}
Let $T$ be a non-empty subset of $[n]$ with size $k$. The size $k$ subsets of $[n]$ which are less than or equal to $T$ under the component-wise partial order form the bases of a matroid $M=\langle T \rangle$. The matroids $M=\langle T \rangle$ arising from this construction are known as \emph{shifted matroids}. As the choice of subset $T\subseteq [n]$ completely determines the structure of $M= \langle T \rangle$, we refer to $T$ as the \emph{defining basis} of $M$. Shifted matroids are an important and well-studied class of matroids which appear under many different names and in various contexts. A non-exhaustive list of such names is: nested matroids \cite{Hampe}, non-exchangeable matroids \cite{DO}, generalized Catalan matroids \cite{BONIN} and Schubert matroids \cite{Sohoni}. Relevant to the present work, exchangeable matroids were recently introduced by Deza and Onn \cite{DO} in their work on $k$-hypergraphs. The class of exchangeable matroids is exactly the class of matroids which are \emph{not} shifted matroids. Our work was motivated by, and answers, various questions posed by Deza and Onn; see Theorem \ref{thrm:2-trade} and Counterexample \ref{ex:counter}.

Threshold matroids are the class of matroids whose bases can be separated from non-bases by a weight function on their ground set. The description of threshold matroids and, more generally, threshold hypergraphs using weight functions makes them interesting objects from the perspective of combinatorial optimization and polytope theory; see \cite[Section $9$]{Boolean}, \cite{GK}, \cite[Section $2$]{BS} and \cite[Remark $7.8$]{CLLPPS}.

It can be shown that all threshold matroids are shifted; see Lemma \ref{lem:shift->thresh}. In recent work, Deza and Onn ask whether the converse is true: Are all shifted matroids threshold \cite[Question $7.2$]{DO}? As evidence for their question, they show that if a shifted matroid is also paving, or binary, or rank three, then it is threshold. Despite these results, we show that the question is false in the strongest possible manner.

\begin{mainthrm}
  \label{thrm:almost}
  Almost all shifted matroids are \underline{not} threshold. That is,
    \[\lim_{n\to\infty}\frac{\text{$\#$ of isomorphism classes of threshold matroids on a ground set of size $n$}}{\text{$\#$ of isomorphism classes of shifted matroids on a ground set of size $n$}}=0. \]
\end{mainthrm}

The smallest shifted matroids that are not threshold are of rank $4$ and are defined on a ground set of size $8$. For example, the shifted matroid $M= \langle 2468 \rangle$ defined by the subset $2468 \subseteq [8]$ is not threshold. On a ground set of size $14$, $50.00$ percent of all shifted matroids are threshold. On a ground set of size $35$, less than $0.01$ percent of shifted matroids are threshold. For an elementary proof that the matroid $M=\langle 2468\rangle$ is not threshold, see Counterexample \ref{ex:counter}.

We prove Theorem \ref{thrm:almost} by first characterizing threshold matroids in terms of their defining bases (Theorem \ref{thrm:main}) and then enumerating the set of isomorphism classes of threshold matroids on a ground set of size $n$ (Theorem \ref{thrm:enumeration}). Our enumeration shows that the number of isomorphism classes of threshold matroids on a ground set of size $n$ is $O(n^6)$. The number of isomorphism classes of shifted matroids on a ground set of size $n$ is $2^n-1$. Comparing these enumerations leads to a proof of Theorem \ref{thrm:almost}.

\begin{mainthrm}
  \label{thrm:enumeration}
Let $F(n)$ be the number of isomorphism classes of threshold matroids on a ground set of size $n$. Then
\[F(n) = \binom{n+1}{2} + \binom{n+1}{4}+ \binom{n+1}{6} + \binom{n-1}{6}\]
where we take $\binom{n}{k}=0$ whenever $n<k$.
\end{mainthrm}

While Deza and Onn were the first to ask whether all shifted matroids are threshold, generalizations of this question to $k$-hypergraphs have been posed, and answered in the negative, numerous times. See, for example,  \cite[Theorem $3.1$]{KR}, \cite[Example $2.2$]{RRST}, \cite[Theorem $7.3.3$]{Muroga} and \cite[Example $2.15$]{BS}. 

Our characterization of threshold matroids relies on the structure of the defining bases of shifted matroids. Given a subset $T\subseteq [n]$, we will refer to the \emph{gaps} and \emph{blocks} of $T$. We give formal definitions of these terms in Definition \ref{defn:blocks}, but for now, we illustrate the concepts with an example. The subset $23479\subseteq [10]$ has three blocks and four gaps. The first block is $234$, the second block is $7$, and the third block is $9$ while $1$, $56$, $8$ and $10$ are the first, second, third and fourth gaps. 
\begin{mainthrm}
  \label{thrm:main}
Suppose $M$ is a shifted matroid and let $T$ be the defining basis of the shifted matroid obtained from contracting all coloops of $M$.
Then:
\begin{enumerate}[i)]
\item If $T$ has four or more blocks, $M$ is not threshold.
\item If $T$ has two or fewer blocks, $M$ is threshold.
\item If $T$ has three blocks:
\begin{enumerate}[a)]
\item If the second block or gap of $T$ has size one, $M$ is threshold.
\item Otherwise, $M$ is not threshold. 
\end{enumerate}
\end{enumerate}
\end{mainthrm}

For the cases in which $M$ is threshold, we explicitly construct a weight function as a certificate of thresholdness. To show the cases of non-thresholdness, we make use of a tool which we call \emph{uniform asummability}. This tool is the uniform analogue of \emph{asummability} from the simple games literature \cite[Section $2.6$]{TZ} and the Boolean functions literature \cite[Chapter $9$]{Boolean}. Along the way, we also prove the following theorem and answer another question of Deza and Onn \cite[Question $1.1$ of][for matroids]{DO}.

\begin{mainthrm}
  \label{thrm:2-trade}
A matroid is threshold if and only if it is $2$-uniform asummable. Equivalently, a rank $k$ matroid $M$ is threshold if and only if for all bases $B_1,B_2$ and size $k$ non-bases $D_1,D_2$, we have 
\[B_1 \cup B_2 \neq D_1 \cup D_2 \text{ or } B_1\cap B_2 \neq D_1 \cap D_2. \]
\end{mainthrm}

Lastly, we explore other consequences of our characterization of threshold matroids. Along with a proof of Theorem \ref{thrm:enumeration}, we provide alternative and simplified proofs of some of the main results of \cite{DO} (Theorem \ref{thrm:DO}) and produce an algorithm, polynomial in the number of bases, to check if a matroid is threshold (Theorem \ref{thrm:polynomial}).

Much of our work references the paper \cite{DO} by Deza and Onn. Their work begins in the more general setting of $k$-hypergraphs and then later specializes to $k$-hypergraphs that form the bases of a matroid. As our paper is entirely focused on matroids, some of our terminology is different than the terminology in \cite{DO}. In Table \ref{tab:table}, we provide a translation between the terminology used in this paper and the terminology used in \cite{DO}.

\begin{table}[ht] 
\label{tab:table}
\centering
    \begin{tabular}{cc}\toprule
         Our terminology & Terminology in \cite{DO} \\ 
         \midrule
         Threshold (Definition \ref{def:thresh}) & Separable \cite[Section $1$]{DO}\\[1em]
         Not threshold (Definition \ref{def:thresh}) & Equatable \cite[Section $1$]{DO}\\[1em]
         Shifted (Definition \ref{def:shifted}) & Not exchangeable \cite[Section $1$]{DO}\\[1em]
         $2$-uniform asummable (Definition \ref{def:trade_robust}) & Unnamed but the subject of \cite[Question $1.1$]{DO}\\[1em]
    \end{tabular}
    \caption{Translation between the terminology used in this paper and the terminology used in \cite{DO}. References for the equivalences, from top to bottom, are: Remark \ref{rem:thresh}, \cite[Lemma $2.1$]{DO}, Lemma \ref{lem:shifted_char} and Lemma \ref{lem:ele}.}
\end{table}

The structure of the paper is as follows. In Section \ref{sec:back}, we introduce shifted and threshold matroids. In Section \ref{sec:counter}, we outline the relationship between shifted and threshold matroids and give the first example of a shifted matroid that is not threshold. In Section \ref{sec:trade}, we develop the tool of uniform asummability to aid in our characterization of threshold matroids. In Section \ref{sec:main}, we prove our main theorem, a characterization of threshold matroids. Finally, in Section \ref{sec:consequences}, we prove various consequences of our main theorem.

\section*{Acknowledgments}
I am grateful to C. Klivans for suggesting this problem and for her support throughout. I would also like to thank S. Onn for enlightening discussions early in this project, M. Chan for their feedback on an early draft of this paper and C. Chenevière for suggesting a cleaner proof of Theorem \ref{thrm:enumeration}. Finally, I sincerely thank the anonymous reviewers for their detailed and incisive comments.

The author was partially supported by the National Science Foundation through FRG
DMS-2053221 and the Department of Education through a GAANN fellowship.

\section{Background}
\label{sec:back}

This paper studies two special kinds of matroids, shifted matroids and threshold matroids. Both of these classes of matroids are most naturally defined using the \emph{bases} of a matroid. We recall the bases definition of a matroid below. Throughout the paper, we will often refer to the basic concepts of matroid theory (such as circuits, independent sets and deletion/contraction) without definition. These can be found in any standard reference on matroid theory, such as \cite{oxley}.

\begin{defn}\label{defn:matroid}
A rank $k$ matroid $M$ on ground set $E$ is a non-empty collection of $k$-element subsets of $E$, called \emph{bases}, which satisfy the basis-exchange axiom. The basis-exchange axiom says that for every $B_1,B_2\in M$ and $b_1\in B_1\setminus B_2$ there exists $b_2\in B_2\setminus B_1$ such that $(B_1\setminus b_1)\cup b_2 \in M$.
\end{defn}

\subsection{Shifted matroids}
Consider the set $[n]$ with its total ordering $1<2<\ldots <n$. Let $W(n)$ be the set of weakly increasing words whose alphabet is $[n]$ and let $W(k,n)$ be the set of weakly increasing words of length $k$ whose alphabet is $[n]$. Formally, a weakly increasing word $T\in W(k,n)$ is a function $T:[k]\to [n]$ which is weakly monotonically increasing, i.e., $T(1) \leq T(2) \leq \ldots \leq T(k)$. Throughout this paper, we often identify the word $T$ with the string $t_1 t_2 \cdots t_k$ where $t_i= T(i)$.

\begin{defn}\label{defn:cmpt}
  The \emph{component-wise partial order} $\preceq$ on $W(k,n)$ declares $S\preceq T$ if $S(i)\leq T(i)$ for all $1\leq i \leq k$.
\end{defn}
\noindent See Figure \ref{fig:hasse_words} for the Hasse diagram of $(W(2,3), \preceq)$.

\begin{figure}[ht]
\centering{
\begin{tikzpicture}
  \node (11) at (0,0) {$11$};
  \node (12) at (0,1) {$12$};
  \node (13) at (1,2) {$13$};
  \node (22) at (-1,2) {$22$};
  \node (23) at (0,3) {$23$};  
  \node (33) at (0,4) {$33$};
  \draw (11) to (12);
  \draw (12) to (13);
  \draw (12) to (22);
  \draw (13) to (23);
  \draw (22) to (23);
  \draw (23) to (33);
\end{tikzpicture}
}
\caption{The component-wise partial order of $W(2,3)$.}
\label{fig:hasse_words}
\end{figure}
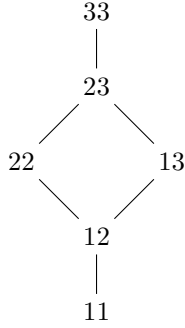

The following lemma shows that the component-wise partial order is invariant under permutation of words. That is, if we can rearrange a word $T$ so that it is larger than $S$ at every index, then $T$ is still larger than $S$ at every index after we re-sort $T$ into a weakly increasing word.

\begin{lemma}
  \label{lem:weaker_prec}
Let $S,T\in W(k,n)$. Then $S\preceq T$ if and only if there is a bijection $\pi:[k]\to [k]$ such that $S(i)\leq T\circ \pi (i)$ for all $i\in[k]$.
\end{lemma}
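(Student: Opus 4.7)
The forward direction is immediate by taking $\pi$ to be the identity on $[k]$. The content of the lemma is the reverse direction, which I would prove by induction on the number of inversions of the witness bijection $\pi$, reducing step by step to the identity (essentially ``bubble sorting'' $\pi$).

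For the inductive step, I would pick an inversion $j_1 < j_2$ of $\pi$, that is, with $\pi(j_1) > \pi(j_2)$, and form $\pi'$ by swapping the values $\pi(j_1)$ and $\pi(j_2)$. The new bijection $\pi'$ has strictly fewer inversions than $\pi$, so by induction it suffices to verify that $\pi'$ still witnesses the hypothesis, i.e., $S(i) \leq T(\pi'(i))$ for every $i \in [k]$. Only the indices $j_1$ and $j_2$ need checking, and each reduces to a short two-step inequality: at $j_1$, using that $S$ is weakly increasing and the hypothesis at $j_2$, we get $S(j_1) \leq S(j_2) \leq T(\pi(j_2)) = T(\pi'(j_1))$; at $j_2$, using that $T$ is weakly increasing together with $\pi(j_2) < \pi(j_1)$, we get $S(j_2) \leq T(\pi(j_2)) \leq T(\pi(j_1)) = T(\pi'(j_2))$.

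The proof is short and I do not expect a genuine obstacle. The crucial point is that the assumption that \emph{both} $S$ and $T$ lie in $W(k,n)$, so both are weakly increasing, is exactly what makes the swap in the direction that removes an inversion of $\pi$ only improve both inequalities simultaneously. One could equivalently cast the statement as a Hall-type matching argument on the bipartite graph with edges $\{(i,j) : S(i) \leq T(j)\}$, verifying that the identity matching lies in this graph, but the bubble-sort reduction above is the most hands-on route and makes the monotonicity of $S$ and $T$ appear exactly where needed.
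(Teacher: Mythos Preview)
Your proof is correct and essentially identical to the paper's: both argue the backward direction by repeatedly swapping an inversion of $\pi$ and checking the two affected inequalities via the monotonicity of $S$ and $T$. The only cosmetic difference is that the paper restricts to \emph{adjacent} descents $i_0,i_0+1$ (and appeals to generation of $\mathfrak{S}_k$ by adjacent transpositions), whereas you allow an arbitrary inversion $j_1<j_2$ and induct on the inversion count; the two verification chains are the same.
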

\begin{proof}
  Suppose $S\preceq T$ and let $\pi$ be the identity permutation. By definition, $S(i)\leq T\circ \pi(i)= T(i)$ for all $i\in[k]$ and the forward direction follows. We now focus on the backward direction.

  Suppose there exists a permutation $\pi:[k]\to [k]$ such that $S(i)\leq T\circ \pi(i)$ for all $i\in[k]$. If $\pi(i)< \pi(i+1)$ for all $i\in [k]$, then $\pi$ is the identity permutation and we are done. Instead, assume there is some index $i_0$ where $\pi(i_0)> \pi(i_0+1)$. Let $\tilde \pi= (i_0\,\, i_0+1)\pi$ be the permutation composing $\pi$ with the adjacent transposition $(i_0\,\, i_0+1)$. We show that $S(i) \leq T \circ \tilde \pi(i)$ for all $i\in[k]$. Since we composed $\pi$ with an adjacent transposition, we only need to check this inequality in two indices, $i_0$ and $i_0+1$. We can check this using the fact that $T$ and $S$ are weakly monotonically increasing. Observe that
  \[T\circ \tilde \pi(i_0) = T\circ\pi(i_0+1)\ge S(i_0+1) \ge S(i_0)   \]
  and
  \[T\circ \tilde \pi(i_{0}+1) = T\circ\pi(i_0) \geq T\circ \pi(i_0+1) \geq S(i_0+1). \]
 Since $S_n$ is generated by adjacent transpositions, we can repeatedly do this modification until $\tilde \pi$ is the identity permutation.
\end{proof}

In terms of strings of letters, Lemma \ref{lem:weaker_prec} states that $S\preceq T$ if there is a one to one assignment $\psi$ between the letters of $T=t_1\cdots t_k$ and the letters of $S=s_1\cdots s_k$ such that $t_i \geq \psi(t_i)$.

The set of size $k$ subsets of $[n]$, denoted $\binom{[n]}{k}$, can be identified with the subset of $W(k,n)$ of strictly increasing words. Hence $\binom{[n]}{k}$ inherits a partial ordering from the component-wise partial ordering of $W(k,n)$. We also call this ordering of $\binom{[n]}{k}$ the component-wise partial order. For the rest of this paper, we will identify $\binom{[n]}{k}$ with its image in $W(k,n)$ and refer to sets by their associated strictly increasing word. We can use the component-wise partial ordering of $\binom{[n]}{k}$ to define shifted matroids.

\begin{defn}\label{def:shifted}
A \emph{shifted matroid} $M$ of rank $k$ is one which is isomorphic to a matroid whose bases form an order ideal in $\binom{[n]}{k}$ under the component-wise partial order. 
\end{defn}

A theorem of Klivans \cite{K-thesis} greatly simplifies the conceptual complexity of shifted matroids.

\begin{thrm}[{\cite[Theorem $5.4.1$]{K-thesis}}]
An order ideal in the component-wise partial order of $\binom{[n]}{k}$ is a matroid if and only if it is principal. Thus, any rank $k$ shifted matroid is isomorphic to the order ideal generated by some size $k$ subset $T\in \binom{[n]}{k}$.
\end{thrm}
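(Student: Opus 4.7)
I would prove the two directions of the theorem separately.

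\emph{Principal order ideals are matroids.} For $T = t_1 \cdots t_k$, the order ideal $\langle T \rangle$ consists of all $S = s_1 \cdots s_k \in \binom{[n]}{k}$ with $s_i \le t_i$ for every $i$. This is the basis description of the Schubert or generalized Catalan matroid attached to $T$; that it satisfies basis exchange can be verified directly---given $B_1, B_2 \in \langle T \rangle$ and $x \in B_1 \setminus B_2$, one picks the element $y \in B_2 \setminus B_1$ closest to $x$ in a suitable direction so that sorting $(B_1 \setminus x) \cup y$ remains coordinatewise below $T$---or by realizing $\langle T \rangle$ as the transversal matroid of the nested system $(\{1, \ldots, t_i\})_{i=1}^k$.

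\emph{Matroid-order-ideals are principal.} For the converse, suppose by contradiction that an order-ideal matroid $I \subseteq \binom{[n]}{k}$ has two distinct maximal elements $T_1 = a_1 \cdots a_k$ and $T_2 = c_1 \cdots c_k$. Let $j$ be the smallest index where $a_j \ne c_j$, and without loss of generality assume $a_j < c_j$. A direct check shows $a_j \notin T_2$: for $l < j$, $c_l = a_l < a_j$; for $l \ge j$, $c_l \ge c_j > a_j$. Hence $a_j \in T_1 \setminus T_2$, and by basis exchange there is some $b_2 \in T_2 \setminus T_1$ with $B := (T_1 \setminus a_j) \cup b_2 \in I$.

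The crux is then a two-case analysis on $b_2$. If $b_2 < a_j$, write $b_2 = c_l$; the case $l \ge j$ fails because $c_l \ge c_j > a_j > b_2$, and the case $l < j$ fails because $b_2 = c_l = a_l \in T_1$ contradicts $b_2 \in T_2 \setminus T_1$. If instead $b_2 > a_j$, re-sorting $B$ yields a set satisfying $T_1 \prec B$ with $T_1 \ne B$, since positions before $j$ are unchanged while replacing $a_j$ by a larger value in a strictly increasing sequence weakly increases every coordinate from $j$ onward and strictly increases at least one. But then $B \in I$ contradicts the maximality of $T_1$, so $T_1 = T_2$ and $I$ is principal.

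The main obstacle is selecting the correct element of $T_1$ to swap: choosing $a_j$ at the smallest differing index is precisely what makes the downward-swap case collapse back into $T_1$ (killing it) and forces every upward swap to contradict maximality. Beyond the componentwise order, only the strict monotonicity of $T_1, T_2$ and the minimality of $j$ enter the argument; no lattice structure on $\binom{[n]}{k}$ is needed.
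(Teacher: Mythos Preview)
The paper does not prove this theorem; it is quoted from Klivans' thesis \cite[Theorem~5.4.1]{K-thesis} and stated without argument, so there is no in-paper proof to compare against. Your proof is correct and self-contained. The forward direction via the transversal-matroid realization of $\langle T\rangle$ on the nested system $(\{1,\ldots,t_i\})_{i=1}^k$ is the standard route. For the converse, the choice of $a_j$ at the \emph{first} differing index is exactly the right pivot: your Case~1 analysis shows every element of $T_2\setminus T_1$ exceeds $a_j$, so the exchanged basis $(T_1\setminus a_j)\cup b_2$, once sorted, strictly dominates $T_1$ in the component-wise order, contradicting maximality. No gaps.
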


We will usually assume our shifted matroids $M$ are defined on the ground set $[n]$ and their bases form a principal order ideal under the component-wise partial order. In this case, we write $M=\langle T \rangle$ to indicate that $M$ has bases equal to the principal order ideal generated by $T\in \binom{[n]}{k}$. We call $T$ the \emph{defining basis} of $M$. See Figure \ref{fig:shifted} for an example of the shifted matroid with defining basis $24\subseteq \binom{[4]}{2}$.

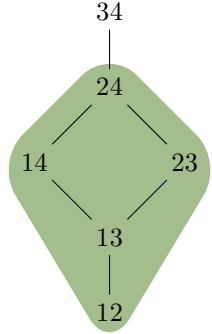
\begin{figure}[ht]
\centering{
\begin{tikzpicture}

\filldraw[rounded corners=0.5cm, thick, green, fill opacity = 0.25]  (0,-0.5) -- (-1.5,2) -- (0,3.5) -- (1.5,2) -- cycle;

\node (12) at (0,0) {12};
\node (34) at (0,4) {34};
\node (24) at (0,3) {24};
\node (14) at (-1,2) {14};
\node (23) at (1,2) {23};
\node (13) at (0,1) {13};
\draw (34) -- (24);
\draw (24) -- (14);
\draw (24) -- (23);
\draw (13) -- (14);
\draw (13) -- (23);
\draw (13) -- (12);
\end{tikzpicture}
}
\caption{The shifted matroid with defining basis $24\subseteq [4]$ has bases $24,14,23,13$ and $12$.}
\label{fig:shifted}
\end{figure}

We now introduce the \emph{vicinal preorder} of a matroid. This concept will let us give an intrinsic characterization of shifted matroids (Lemma \ref{lem:shifted_char}) and will be used to construct an algorithm to test if a given matroid is threshold (Theorem \ref{thrm:polynomial}).

\begin{defn}
Given a rank $k$ matroid $M$ on a ground set $E$ and $i\in E$, the \emph{open and closed neighborhoods} of $i$ in $M$ are the following two subcollections of $\binom{E}{k-1}$:
\begin{align*}
N_M(i) &\coloneqq \left\{ B\setminus i : \text{ $i\in B$ and $B$ a basis of $M$}\right\}\\
N_M[i] &\coloneqq \left\{ B\setminus j : \text{ $i\in B$, $j\in B$ and $B$ a basis of $M$}\right\}.
\end{align*}

Note that $N_{M}(i)\subseteq N_{M}[i]$. The \emph{vicinal preorder} of $M$ on $E$ declares $i\preceq_M j$ whenever $N_M[i] \supseteq N_M(j)$.
\end{defn}

\begin{thrm}[{\cite[Theorem $1$]{Klivans}}]
  \label{thrm:vicinal}
The vicinal preorder of $M$ on $E$ is total if and only if $M$ is shifted. If $M$ is shifted, any bijection between $E$ and $[n]$ which preserves the vicinal preorder induces an isomorphism between $M$ and a shifted matroid $\tilde M= \left\langle T \right\rangle$ on ground set $[n]$.
\end{thrm}

Using Theorem \ref{thrm:vicinal},
we give an intrinsic characterization of all shifted matroids in terms of their bases. This characterization is important because it shows that the class of shifted matroids is the same as the class of non-exchangeable matroids; this class of matroids was recently studied in \cite{DO}. See Table \ref{tab:table} for a dictionary between the terminology used in this paper and the terminology used in \cite{DO}.
\begin{lemma}
  \label{lem:shifted_char}
A matroid $M$ on ground set $E$ is shifted if and only if for all bases $B_1,B_2\in M$ and $b_1\in B_1\setminus B_2$, $b_2\in B_2\setminus B_1$, either $(B_1\setminus b_1)\cup b_2 \in M$ or $ (B_2\setminus b_2)\cup b_1 \in M$.
\end{lemma}
\begin{proof}
  If $M$ is a shifted matroid then there is a bijection $\varphi:E\to[n]$ which maps the bases of $M$ to an order ideal in the component-wise partial order. Thus for any two bases $B_1, B_2$ and $b_1\in B_1\setminus B_2$, $b_2\in B_2\setminus B_1$ either $\varphi(b_1)<\varphi(b_2)$ or $\varphi(b_2)<\varphi(b_1)$. However, since the bases of $M$ form an order ideal under $\varphi$, this means that either $(B_1\setminus b_1)\cup b_2$ or $(B_2\setminus b_2)\cup b_1$ is a basis of $M$.

  If $M$ is not a shifted matroid, then, by Theorem \ref{thrm:vicinal}, the vicinal preorder is not total. That is, there exist elements $i,j\in E$ and bases $B_i,B_j$ such that $i\in B_i$, $j\in B_j$ and if $B_i'$ and $B_j'$ are bases that contain $i$ and $j$ then $B_j'\not\supseteq (B_i\setminus i)$ and $B_i'\not\supseteq (B_j\setminus j)$. Note that this condition implies that $j\not\in B_i$ and $i\not\in B_j$. Therefore $i\in B_i\setminus B_j$, $j\in B_j\setminus B_i$ and neither $(B_i\setminus i)\cup j$ nor $(B_j\setminus j)\cup i$ are bases of $M$.
\end{proof}

Throughout this paper, we will focus on the structure of the defining basis of a shifted matroid. In particular, we will often refer to the gaps and blocks of a non-empty subset of $[n]$. We define these notions below.

\begin{defn}
  \label{defn:blocks}
Let $T$ be a non-empty subset of $[n]$. Then $T$ can be written uniquely as a concatenation of subwords \[T=T_1 T_2 \cdots  T_\ell\] which are the inclusion-maximal contiguous subsets of $T$. 
\begin{itemize}
\item We call $T_i$ the $i$th \emph{block} of $T$. 
\item We call the $j$th block of $[n]\setminus T$, the $j$th \emph{gap} of $T$.
\end{itemize}
\end{defn}

\begin{example}
The set $T= 2678\subseteq [9]$ has two inclusion-maximal contiguous subsets, namely $2$ and $678$. Therefore, $T$ has two blocks and they are equal to $T_1=2$ and $T_2=678$.

The complement of $T$ can be written as the concatenation of $1$, $345$ and $9$. As these sets are the inclusion-maximal contiguous subsets of the complement, they are the three gaps of $T$.
\end{example}

We close this subsection with a technical lemma characterizing the circuits of a shifted matroid in terms of its defining basis. Recall that a circuit of a matroid is an inclusion-minimal subset of the ground set that is not contained in any basis. This lemma will be crucial in the proof of our characterization of threshold matroids as well as the proof of Theorem \ref{thrm:DO}.

\begin{lemma}
  \label{lem:circuits}
  Let $M= \langle T \rangle$ be a shifted matroid and $C=c_0c_1\cdots c_m$ be a circuit of $M$. Let $T=T_1\cdots T_\ell$ be the block decomposition of $T$. If $C$ is a loop, then $c_0$ is strictly larger than the last element of $T_\ell$. Otherwise, there exists some index $1\leq i \leq \ell$ such that:
  \begin{itemize}
    \item $|C| = |T_{i}|+|T_{i+1}|+\ldots+|T_\ell| +1$.
    \item $c_1\cdots c_m\preceq T_{i}T_{i+1}\cdots T_\ell$.
    \item If $i\neq 1$, then $c_0$ is strictly larger than the last element of $T_{i-1}$.
  \end{itemize}
\end{lemma}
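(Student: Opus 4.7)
The plan is to translate circuit-hood into explicit inequalities on the components of the defining basis $T$ under the component-wise order $\prec$. The key preliminary tool is the following independence criterion for shifted matroids: a size-$j$ subset $S = \{s_1 < \cdots < s_j\}$ with $j \leq k$ is independent in $\langle T \rangle$ if and only if $s_p \leq t_{k-j+p}$ for every $p \in [j]$. The forward direction is a position count: in any basis $B = \{b_1 < \cdots < b_k\} \prec T$ containing $S$, if $s_p$ occupies position $\sigma(p)$ in $B$, then $\sigma(p) \leq k - j + p$ (since $\sigma$ is strictly increasing with image in $[k]$), which gives $s_p = b_{\sigma(p)} \leq t_{\sigma(p)} \leq t_{k-j+p}$. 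The backward direction is a Hall-type matching argument: Lemma \ref{lem:weaker_prec} lets us promote any matching of $S$ into $T$ to an order-preserving one, and greedily padding with sufficiently small unused integers produces an explicit basis $B \supset S$ with $B \prec T$.

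With this criterion in hand, the loop case is the $j = 1$ instance: $\{c_0\}$ is dependent iff $c_0 > t_k$, and $t_k$ is precisely the last entry of the last block $T_\ell$.

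For a circuit $C = \{c_0 < c_1 < \cdots < c_m\}$ with $m \geq 1$, I apply the criterion twice. Since $C \setminus c_0$ is independent of size $m$, the criterion yields $c_p \leq t_{k-m+p}$ for every $p \in [m]$. Since $C$ itself is dependent of size $m+1$, the criterion must fail for some index $q \in \{0, 1, \ldots, m\}$; the inequalities just obtained rule out every index $q \geq 1$, so the failure must occur at $q = 0$, giving $c_0 > t_{k-m}$ (adopting the convention $t_0 := 0$ when $k = m$, which makes this inequality trivial in the edge case $|C| = k+1$). Combining $c_0 > t_{k-m}$ with $c_0 < c_1 \leq t_{k-m+1}$ forces $t_{k-m+1} \geq t_{k-m} + 2$, so positions $k-m$ and $k-m+1$ of $T$ must lie in different blocks. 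Letting $i$ be the block index such that $T_i$ begins at position $k-m+1$, we have $|T_1| + \cdots + |T_{i-1}| = k - m$, so $|T_i| + \cdots + |T_\ell| = m = |C| - 1$, giving the first bullet; the inequalities $c_p \leq t_{k-m+p}$ for $p \in [m]$ are exactly $c_1 c_2 \cdots c_m \prec T_i T_{i+1} \cdots T_\ell$, giving the second; and when $i \neq 1$, the element $t_{k-m}$ is the last entry of $T_{i-1}$, so $c_0 > t_{k-m}$ gives the third.

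The main obstacle is establishing the backward direction of the independence criterion, which requires care in constructing the extension $B \supset S$ when some $s_p$ are close to $1$ (the naive ``place $S$ at the top'' construction can fail). Once the criterion is in place, the rest of the argument is essentially a mechanical translation of the circuit axioms, with the block-gap structure of $T$ appearing automatically from the strict separation forced between $t_{k-m}$ and $t_{k-m+1}$.
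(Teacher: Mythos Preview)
Your proposal is correct and follows essentially the same route as the paper's proof. The paper proceeds identically: it notes that all facets of $C$ are independent iff the maximal one $c_1\cdots c_m$ is (by shiftedness), which translates to $c_1\cdots c_m \prec t_{k-m+1}\cdots t_k$; then dependence of $C$ forces $c_0 > t_{k-m}$ (or $|C|=k+1$), and the chain $t_{k-m} < c_0 < c_1 \leq t_{k-m+1}$ pins $t_{k-m+1}$ as the start of some block $T_i$. Your version is slightly more explicit in isolating and justifying the independence criterion $s_p \leq t_{k-j+p}$ as a separate lemma (the paper simply asserts it), but the logical skeleton is the same.
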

\begin{proof}
  If $C$ is a loop, then $m=0$ and $c_0$ is strictly larger than the last element of $T_\ell$. From here on out, assume that $C$ is not a loop. 

  For ease of indexing, suppose that $T=t_1\cdots t_k$ and $C=c_0c_1\cdots c_m$ with $1\leq m \leq k$. Since $M$ is shifted, all maximal proper subsets of $C$ are contained in a basis of $M$ if and only if $c_1\cdots c_m$ is contained in a basis of $M$. This occurs exactly when $c_1\cdots c_m \preceq t_{k-m+1}\cdots t_{k}$. In order for this condition to be met and $C$ to be dependent, we must have that $c_0 > t_{k-m}$ or $|C|=k+1$. If $|C|=k+1$, then $|C| = |T_1|+\ldots + |T_\ell|+1$ and $c_1\cdots c_m\preceq t_1\cdots t_m = T$.
  In this case, $C$ satisfies our conclusion.

  Now, assume that $|C|<k+1$ so that $c_0> t_{k-m}$. We have the inequalities
  \[t_{k-m}<c_0< c_1 \leq t_{k-m+1}. \]
  Hence, $t_{k-m}< t_{k-m+1}-1$ and $t_{k-m+1}$ is the start of some block of $T$. Let $T_{i}$ be the block for which $t_{k-m+1}$ is the start of. We now have that:
  \begin{itemize}
    \item $|C| = |T_{i}|+|T_{i+1}|+\ldots+|T_\ell| +1$.
    \item $c_1\cdots c_m\preceq t_{k-m+1}\cdots t_k= T_{i}T_{i+1}\cdots T_\ell$.
    \item $c_0$ is strictly larger than $t_{k-m}$, which is the last element of $T_{i-1}$.
  \end{itemize}
\end{proof}

\begin{example}
  We can use Lemma \ref{lem:circuits} to analyze the circuits of the shifted matroid $M=\langle 2468\rangle$ on ground set $[8]$. By the first bullet point, the circuits of $M$ have size $2$, $3$, $4$ and $5$. We can use the second and third bullet points of Lemma \ref{lem:circuits} to write down all such circuits:
  \begin{align*}
    \text{Size $2$ circuits: }& \{78\}\\
    \text{Size $3$ circuits: }& \{567, 568\}\\
    \text{Size $4$ circuits: }& \{3456,3457,3458,3467,3468\}\\
    \text{Size $5$ circuits: }& \{12abc \text{ where $3\leq a \leq 4$, $a< b \leq 6$ and $b< c \leq 8$}\}.
  \end{align*}
\end{example}

\subsection{Threshold matroids}
A threshold matroid is one whose bases are determined by a weight function, as follows:
\begin{defn}\label{def:thresh}
Let $M$ be a rank $k$ matroid on $E$. A matroid $M$ is a \emph{threshold matroid} if there exists a function $w:E\to \R$ such that, for all $B\in \binom{E}{k}$,
\[\text{$B$ is a basis of $M$} \iff w(B)=\sum_{b\in B} w(b) > 0.\]
When this is the case, we say $w$ is a weight function for $M$ and call $w(B)$ the \emph{weight} of the basis $B$.
\end{defn}
\begin{remark}\label{rem:thresh}
In recent work, Deza and Onn \cite{DO} introduce the notion of separable matroids. Their definition of a separable matroid is exactly the same as that of a threshold matroid, except that one only requires the weight of bases to be non-negative instead of positive. It can be shown, by perturbing the weight function slightly, that a matroid is threshold if and only if it is separable. We choose to use the name threshold because it coincides with that of threshold graphs and the earlier works of Bhanu Murthy--Srinivasan \cite{BS} and Klivans--Reiner \cite{KR}.
\end{remark}

\begin{remark}
  \label{rem:GK}
Previously, Giles and Kannan \cite{GK} defined and characterized a different notion of threshold matroid. For the rest of this remark, we will refer to their definition as \emph{non-uniform threshold}. A matroid $M$ is non-uniform threshold if there exists a weight function $w:E\to \R$ and a threshold $t\in \R$ such that a set $I\subseteq E$ is independent if and only if $w(I)\leq t$. Non-uniform thresholdness is a strictly stronger condition than thresholdness. For example, the shifted matroid $M= \langle 246 \rangle$ is threshold but not non-uniform threshold.  
\end{remark}

We close this subsection with two lemmas which show that thresholdness is preserved under duality and adding/contracting coloops. The first lemma is proven in \cite{DO} using different terminology than the current article. For a translation between the terminology used in \cite{DO} and the terminology used in this article, see Table \ref{tab:table}.

\begin{lemma}[{\cite[Lemma $2.3$]{DO}}]
\label{lem:thresh_dual}
Thresholdness is preserved by matroid duality. That is, $M$ is threshold if and only if $M^\perp$ is.
\end{lemma}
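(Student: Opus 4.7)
The plan is to convert a weight function for $M$ into one for $M^\perp$ by a simple affine adjustment, using the fact that bases of $M^\perp$ are complements of bases of $M$. Let $M$ be a rank $k$ matroid on a ground set $E$ of size $n$, assume $0 < k < n$ so both $M$ and $M^\perp$ are nondegenerate, and suppose $w : E \to \mathbb{R}$ is a weight function certifying that $M$ is threshold. Recall that a set $B^\perp \in \binom{E}{n-k}$ is a basis of $M^\perp$ if and only if its complement $B = E \setminus B^\perp$ is a basis of $M$.

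First, I would define the candidate dual weight function
\[
w^\perp(e) \;=\; \frac{w(E)}{n-k} - w(e) \qquad \text{for each } e \in E.
\]
Then for any $B^\perp \in \binom{E}{n-k}$, summing the definition over $B^\perp$ gives
\[
w^\perp(B^\perp) \;=\; |B^\perp|\cdot \frac{w(E)}{n-k} - w(B^\perp) \;=\; w(E) - w(B^\perp) \;=\; w(E \setminus B^\perp).
\]
Therefore $w^\perp(B^\perp) > 0$ if and only if $w(E \setminus B^\perp) > 0$, which by the hypothesis on $w$ is equivalent to $E \setminus B^\perp$ being a basis of $M$, i.e.\ to $B^\perp$ being a basis of $M^\perp$. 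So $w^\perp$ certifies that $M^\perp$ is threshold, giving the forward implication. The backward implication follows immediately by applying the forward implication to $M^\perp$ together with the fact that $(M^\perp)^\perp = M$.

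There is no real obstacle: the only thing that needs care is the degenerate case $k \in \{0, n\}$, where the definition of threshold does not behave well because $w(\emptyset) = 0$ cannot be positive. In the setting in which the lemma is applied in the paper (e.g.\ as a tool for the proofs of Theorem \ref{thrm:main} and related reductions), the matroids under consideration have both positive rank and positive corank, so this case does not arise and can be ignored, or handled by a separate one-line convention.
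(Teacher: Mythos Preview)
Your proof is correct. Note, however, that the paper does not actually supply its own proof of this lemma: it is stated with a citation to \cite[Lemma~2.3]{DO} and left without argument. So there is no in-paper proof to compare against. Your affine adjustment $w^\perp(e) = \frac{w(E)}{n-k} - w(e)$ is the standard one-line construction, and your handling of the degenerate ranks $k\in\{0,n\}$ is appropriate for the paper's applications.
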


\begin{lemma}
\label{lem:thresh_contract}
Thresholdness is preserved by contracting coloops. That is, if $M$ is a rank $k$ matroid with a coloop $e$, then $M$ is threshold if and only if $M/e$ is threshold.
\end{lemma}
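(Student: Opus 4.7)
The plan is to exploit that $e$ being a coloop makes $B \mapsto B \cup \{e\}$ a bijection between the bases of $M/e$ and the bases of $M$; in particular, every $k$-subset of $E \setminus \{e\}$ is a non-basis of $M$.

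For the forward direction, given a weight function $w$ certifying that $M$ is threshold, I would define $w' : E \setminus \{e\} \to \R$ by $w'(f) = w(f) + w(e)/(k-1)$. For any $(k-1)$-subset $B' \subseteq E \setminus \{e\}$ this gives $w'(B') = w(B') + w(e) = w(B' \cup \{e\})$, so $w'(B') > 0$ if and only if $B' \cup \{e\}$ is a basis of $M$, which is equivalent to $B'$ being a basis of $M/e$. Thus $w'$ certifies $M/e$ is threshold.

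For the backward direction, given a weight function $w'$ for $M/e$, I would extend it to $E$ by balancing the weight at $e$ against a uniform shift on $E \setminus \{e\}$. Pick a constant $c$ larger than $\max\{w'(B)/k : B \subseteq E \setminus \{e\},\, |B| = k\}$ and set $w(f) = w'(f) - c$ for $f \neq e$ and $w(e) = (k-1)c$. For a $k$-subset $B = B' \cup \{e\}$ containing $e$, the shifts cancel and $w(B) = w'(B')$, which is positive exactly when $B'$ is a basis of $M/e$, i.e., exactly when $B$ is a basis of $M$. For a $k$-subset $B \subseteq E \setminus \{e\}$ (automatically a non-basis of $M$), one has $w(B) = w'(B) - kc \leq 0$ by the choice of $c$. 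Hence $w$ certifies $M$ is threshold.

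The only real subtlety is choosing $c$ large enough in the backward direction to force the $e$-avoiding $k$-subsets to have nonpositive weight; once this is handled, both constructions are just arithmetic. (The forward formula requires $k \geq 2$, but the $k = 1$ case is trivial since then $M/e$ has rank $0$ and the only basis of $M$ is $\{e\}$.)
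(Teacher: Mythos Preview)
Your proof is correct and essentially identical to the paper's: both directions use the same weight-shifting constructions, with your forward direction matching the paper's second paragraph and your backward direction matching its first. The only cosmetic difference is the choice of shifting constant in the backward direction---the paper takes $\alpha = \max_{i}w'(i)+1$ so that each individual shifted weight is negative, while you take $c$ large enough that the total weight of any $k$-subset avoiding $e$ is nonpositive; both choices work.
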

\begin{proof}
First suppose that $M/e$ is threshold with weight function $w:E\setminus e \to \R$. Let $\alpha = \max(w)+1$ and $\tilde w: E \to \R$ be defined by
\[\tilde w(i) = \begin{cases}
                    (k-1)\alpha & \text{if }i=e\\
                    w(i)-\alpha & \text{if }i\neq e
\end{cases}.\]
We claim that $\tilde w$ is a weight function for $M$. Since $w(i)-\alpha< 0$ for any $i\neq e$, a size $k$ subset $A\subseteq E$ which does not contain $e$ will have $\tilde w(A)< 0$. Now suppose that $e\in A$. Then
\[\tilde w (A) = (k-1)\alpha + w(A\setminus e) - (k-1)\alpha = w(A\setminus e). \]
This means that $\tilde w(A)$ will have positive weight if and only if $A$ contains $e$ and $A\setminus e$ is a basis of $M/e$. In other words, $\tilde w(A)$ will have positive weight if and only if $A$ is a basis of $M$.

Now suppose that $M$ is threshold with weight function $w:E\to \R$. Let $\tilde w:E\setminus e\to \R$ be defined by $\tilde w(i) = w(i) + \frac{w(e)}{k-1}$. Then for any size $k-1$ subset $A\subseteq E\setminus e$,
\[\tilde w(A) = w(A) + w(e) = w(A\cup e). \]
Since $w$ is a threshold function for $M$, this means that $\tilde w(A)$ is positive if and only if $A\cup e$ is a basis of $M$.
\end{proof}

\section{Shifted vs Threshold}
\label{sec:counter}
Although seemingly distinct classes of matroids, it turns out that all threshold matroids are shifted. The weight function of a threshold matroid $M$ induces an ordering on the ground set for which $M$ is a shifted matroid. The more general version of this statement for $k$-hypergraphs has been observed many times; see \cite[Theorem $5.3.1$]{Muroga}, \cite[Lemma $2.1$]{DO} and \cite[Theorem $9.8$]{Boolean}. Because its converse statement is central to our study, we include a proof of the fact that all threshold matroids are shifted.

\begin{lemma}
  \label{lem:shift->thresh}
All threshold matroids are shifted.
\end{lemma}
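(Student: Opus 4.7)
The plan is to exploit the weight function directly to impose a favorable ordering on the ground set. Suppose $M$ is threshold with weight function $w : E \to \mathbb{R}$. Since isomorphism is allowed in the definition of a shifted matroid, I am free to relabel $E$ as $[n]$ by listing its elements $e_1, e_2, \ldots, e_n$ in weakly decreasing order of weight, so that $w(e_1) \geq w(e_2) \geq \cdots \geq w(e_n)$ (breaking ties arbitrarily), and then setting $i := e_i$.

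The key observation is that, under this relabeling, the partial order $\prec$ on $\binom{[n]}{k}$ is reverse-compatible with $w$ in the componentwise sense: if $B' = b'_1 < \cdots < b'_k$ and $B = b_1 < \cdots < b_k$ satisfy $B' \prec B$, then $b'_i \leq b_i$ for every $i$, and hence $w(b'_i) \geq w(b_i)$ by the chosen ordering. Summing over $i$ yields $w(B') \geq w(B)$.

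From here the lemma falls out. Let $B$ be any basis of the relabeled matroid and let $B' \prec B$ be any size-$k$ subset. Then $w(B') \geq w(B) > 0$, so by the threshold condition $B'$ is again a basis. Thus the collection of bases forms a (principal) order ideal in $\binom{[n]}{k}$ under $\prec$, which is precisely the definition of a shifted matroid.

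There is no real obstacle here; the only subtlety is to notice that the weight ordering should be chosen opposite to the index ordering, so that the componentwise order on subsets pulls back to the $\geq$ order on total weights. Once that orientation is set, the argument is a one-line sum.
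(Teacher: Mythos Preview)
Your proof is correct and follows essentially the same approach as the paper: relabel the ground set in decreasing order of weight and verify that the bases form an order ideal under the component-wise order. The only cosmetic difference is that the paper checks closure under single element swaps $(B\setminus b)\cup d$ with $d<b$, whereas you compare an arbitrary $B'\prec B$ directly via the componentwise sum; both arguments rest on the same monotonicity of $w$ with respect to the chosen ordering.
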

\begin{proof}
  Suppose $M$ is a threshold matroid with weight function $w:E\to \R$. Let $<$ be a total ordering of $E$ such that $x<y$ whenever $w(x)> w(y)$. Now consider the order preserving bijection $\psi:E\to [n]$ defined by $<$. We show that the image of $M$ under $\psi$ forms an order ideal in the component-wise partial order. Suppose $B\subseteq [n]$ is the image of a basis of $M$. We need to check that if $b\in B$, $d\not\in B$ and $d<b$, then $(B\setminus b) \cup d$ is also the image of a basis of $M$. But because $d<b$, $w(\psi^{-1}d)>w(\psi^{-1}b)$ so \[w(\psi^{-1}((B\setminus b) \cup d)) = w(\psi^{-1}B)+w(\psi^{-1}d)-w(\psi^{-1}b)>0\]
  and our claim follows.
\end{proof}

Since all threshold matroids are shifted, it is natural to wonder whether the converse might be true. This question was asked for matroids, using slightly different terminology, by Deza and Onn. For a translation between the language used in \cite{DO} and in this article, see Table \ref{tab:table}.
\begin{question}[{\cite[Question $7.2$]{DO}}]
\label{q:thresh}
Are all shifted matroids threshold?
\end{question}

This question was posed in the broader context of shifted and threshold $k$-hypergraphs. It is well known that not all shifted $k$-hypergraphs are threshold. See, for example,  \cite[Theorem $3.1$]{KR}, \cite[Example $2.2$]{RRST}, \cite[Theorem $7.3.3$]{Muroga} and \cite[Example $2.15$]{BS}. Detecting which $k$-hypergraphs are threshold is the focus of much research; see \cite[Chapter $9$]{Boolean} and its references therein. Deza and Onn ask for which families of $k$-hypergraphs are shiftedness and thresholdness the same? They postulate that this might be true for matroids. As evidence, they prove that shifted paving matroids, shifted binary matroids and shifted rank $3$ matroids are threshold \cite[Theorems $5.1$, $5.4$ and $5.5$]{DO}. They also note that none of the previously known examples of shifted but non-threshold $k$-hypergraphs are a matroid \cite[Section $6$]{DO}. 

However, in the example below, we give an example of a shifted matroid that is not threshold. This is a counterexample to Question \ref{q:thresh} and shows that thresholdness is a stronger condition than shiftedness. In fact, as we will show in Theorem \ref{thrm:almost}, thresholdness is a \emph{much} stronger condition than shiftedness.

\begin{exthrm}[Counterexample to Question \ref{q:thresh}]
\label{ex:counter}
  There exists a shifted matroid that is not threshold.
\end{exthrm}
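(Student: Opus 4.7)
The plan is to exhibit the concrete shifted matroid $M = \langle 2468 \rangle$ on $[8]$ and rule out thresholdness by a direct ``summing'' argument, without yet invoking the machinery of uniform asummability developed later in the paper. The bases of $M$ are precisely the size-$4$ subsets $B=\{b_1<b_2<b_3<b_4\}\subset [8]$ with $b_i \le 2i$ for every $i$, so basis/non-basis membership is easy to verify by inspection.

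Within this family I would display two bases and two size-$4$ non-bases whose multiset unions coincide. A natural candidate is
\[ B_1 = \{2,4,6,8\},\quad B_2 = \{1,3,5,7\},\quad D_1 = \{1,2,7,8\},\quad D_2 = \{3,4,5,6\}. \]
A short check confirms $B_1$ and $B_2$ satisfy $b_i\le 2i$, while $D_1$ fails at its third element ($7>6$) and $D_2$ fails at its first ($3>2$). Moreover, $B_1$ and $B_2$ are disjoint with union $[8]$, and so are $D_1$ and $D_2$; thus as multisets $B_1+B_2 = D_1+D_2 = \{1,2,\ldots,8\}$.

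Now suppose for contradiction that $w:[8]\to \R$ is a threshold weight function for $M$. Summing the defining (in)equalities over the two bases and the two non-bases gives
\[ w(B_1)+w(B_2) \;=\; \sum_{i=1}^{8} w(i) \;=\; w(D_1)+w(D_2), \]
but the left-hand side is strictly positive as a sum of two basis weights, while the right-hand side is nonpositive as a sum of two non-basis weights, a contradiction. Hence $M$ is shifted but not threshold.

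The only real obstacle is discovering the four-set configuration in the first place; once it is written down, the remainder is a one-line calculation. Guided by the heuristic that the four isolated one-element blocks of $T=2468$ should produce a parity-type obstruction, the alternating bases $\{2,4,6,8\}$ and $\{1,3,5,7\}$ paired against the ``clumped'' non-bases $\{1,2,7,8\}$ and $\{3,4,5,6\}$ are a natural first try, and the check above shows that this try already succeeds.
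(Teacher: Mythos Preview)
Your proof is correct and is essentially identical to the paper's own argument: the paper also uses $M=\langle 2468\rangle$, the same two bases $2468,1357$ and the same two non-bases $1278,3456$, and derives the same contradiction $0<w(2468)+w(1357)=w(1278)+w(3456)\le 0$ from the fact that both pairs partition $[8]$.
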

\begin{proof}
Let $M=\langle 2468\rangle$ be the shifted matroid with defining basis $2468\subseteq [8]$. Suppose that $M$ is threshold with weight function $w:[8]\to \R$. Note that 
\[2468,1357 \in M \text{ and } 1278,3456 \not\in M \]
so
\[w(2468),w(1357) > 0 \text{ and } w(1278), w(3456) \leq 0. \]
But both $2468\sqcup 1357$ and $1278 \sqcup 3456$ partition $[8]$, so we must have 
\[\hspace{4.5cm}0< w(2468) + w(1357) = w(1278)+w(3456) \leq 0. \hspace{4cm}\]
\end{proof}

Counterexample \ref{ex:counter} inspired the characterizations of threshold matroids given in Theorem \ref{thrm:main} and Theorem \ref{thrm:2-trade}. After proving Theorem \ref{thrm:main}, we will be able to see that $M=\langle 2468 \rangle$ is not threshold because $2468$ has four blocks. After proving Theorem \ref{thrm:2-trade}, we will be able to see that $M=\langle 2468 \rangle$ is not threshold because $2468$ and $1357$ are bases, $1278$ and $3456$ are non-bases, $2468\cup 1357= [8]=1278 \cup 3456$ and $2468\cap 1357 = \emptyset = 1278\cap 3456$.

\section{Uniform Asummability}
\label{sec:trade}

We now introduce our main tool for analyzing which shifted matroids are threshold. We call this tool \emph{uniform asummability} as it is a uniform analogue of asummability from the simple games literature \cite[Section $2.6$]{TZ} and the Boolean functions literature \cite[Chapter $9$]{Boolean}. To introduce uniform asummability, we first equip the set $W(n)$ with a binary operation to turn it into a graded commutative monoid. 

\begin{defn}
  \label{defn:sorted}
  Let $A\in W(k,n)$ and $B\in W(k',n)$. We define the \emph{sorted concatenation} of $A$ and $B$, denoted $A+B$, as the length $k+k'$ weakly increasing word constructed by first concatenating $A$ and $B$ and then rearranging their concatenation into weakly increasing order. Formally, the weakly increasing function $A+B:[k+k']\to [n]$ is defined by the property that $|(A+B)^{-1}(i)|=|A^{-1}(i)|+|B^{-1}(i)|$ for all $i\in [n]$.
\end{defn}

We can also think of $A+B$ as the word obtained from rearranging the multiset union of $A$ and $B$ into weakly increasing order. For example, $234+134= 123344$.

\begin{defn}
\label{def:trade_robust}
Let $\ell$ be a positive integer. A matroid $M$ is \emph{$\ell$-uniform asummable} if for all $B_1,\ldots, B_\ell \in M$ and $D_1,\ldots, D_\ell \in \binom{[n]}{k}\setminus M$ we have
\[B_1+\ldots+B_\ell \neq D_1+\ldots +D_\ell. \]
Note that we do not require the set of bases $\{B_1,\ldots, B_\ell\}$ or the set of non-bases $\{D_1,\ldots, D_\ell\}$ to be pairwise distinct. If $M$ is $\ell$-uniform asummable for all $\ell$, then we say $M$ is \emph{uniform asummable}.
\end{defn}

Note that if a matroid $M$ is $\ell$-uniform asummable then $M$ is $k$-uniform asummable for all $k<\ell$. Uniform asummability is important to the present work because of the following special case of a theorem of Klivans and Reiner.

\begin{thrm}[Theorem 3.1 of \cite{KR}]
  \label{thrm:KR}
A matroid $M$ is threshold if and only if it is uniform asummable.
\end{thrm}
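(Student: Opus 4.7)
My plan is to prove the two directions separately: the forward direction by a direct linearity argument, and the backward direction via the separating hyperplane theorem applied to convex hulls in $\mathbb{R}^n$.

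For the forward direction, suppose $M$ is threshold with weight function $w:E\to\mathbb{R}$. Identifying each $k$-subset of $[n]$ with its characteristic vector in $\mathbb{R}^n$, sorted concatenation corresponds to summing characteristic vectors: if $A,B$ are $k$-subsets regarded as weakly increasing words, then $A+B$ is encoded by $\chi_A+\chi_B\in\mathbb{Z}^n$. So if $B_1,\ldots,B_\ell$ are bases and $D_1,\ldots,D_\ell$ are size-$k$ non-bases with $B_1+\cdots+B_\ell=D_1+\cdots+D_\ell$, then $\sum_i\chi_{B_i}=\sum_i\chi_{D_i}$. Pairing with $w$ gives $\sum_i w(B_i)=\sum_i w(D_i)$, but the left side is strictly positive while the right side is non-positive, a contradiction. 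Hence $M$ is uniform assumable.

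For the backward direction, assume $M$ is uniform assumable. Let $P=\mathrm{conv}\{\chi_B : B\in M\}$ and $Q=\mathrm{conv}\{\chi_D : D\in\binom{[n]}{k}\setminus M\}$; these are compact polytopes both lying in the hyperplane $\{x : \sum x_i = k\}$. I first claim $P\cap Q=\emptyset$. A common point would yield non-negative reals $\lambda_B$, $\mu_D$ with $\sum\lambda_B=\sum\mu_D=1$ and $\sum\lambda_B\chi_B=\sum\mu_D\chi_D$; since the defining linear system has rational coefficients, a rational solution also exists, and clearing denominators gives non-negative integer multiplicities summing to a common positive integer $\ell$ that realize a failure of $\ell$-uniform asummability (with $\ell\ge 2$ automatically, since $\ell=1$ would force a basis to equal a non-basis). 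Given $P\cap Q=\emptyset$, the strict separating hyperplane theorem for disjoint compact convex sets produces $w\in\mathbb{R}^n$ and $c\in\mathbb{R}$ with $\langle w,\chi_B\rangle>c$ for every basis $B$ and $\langle w,\chi_D\rangle<c$ for every size-$k$ non-basis $D$. Replacing $w$ by $w-(c/k)\mathbf{1}$ (which subtracts $c$ from every inner product, since every characteristic vector has coordinate sum $k$) produces a weight function witnessing that $M$ is threshold.

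The main obstacle is the rationality step: passing from a real convex combination realizing a point of $P\cap Q$ to a rational one, and then to integer multiplicities. This is routine because the relevant feasible set is defined by rational linear equations and inequalities and is a non-empty rational polyhedron with rational vertices, but it is the technical linchpin that connects the geometric separating-hyperplane argument to the explicit combinatorial definition of uniform asummability.
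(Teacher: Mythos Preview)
The paper does not supply its own proof of this theorem: it is stated as a citation to Klivans--Reiner (and, in the remark following, traced back in spirit to Taylor--Zwicker), and is used as a black box. So there is no in-paper argument to compare against.

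That said, your proposal is correct and is essentially the standard proof one finds behind such citations. The forward direction is exactly the linearity argument: equality of sorted concatenations is equivalent to equality of sums of characteristic vectors, and pairing with the weight function gives the contradiction. For the backward direction, your convex-hull separation argument is the usual one; the rationality step is indeed the only nontrivial point, and your justification (the feasible region is a bounded rational polyhedron, hence has a rational point whenever it is nonempty) is sound. The final shift by $-(c/k)\mathbf{1}$ to absorb the threshold into the weight function is correct because all characteristic vectors lie in the hyperplane $\sum x_i=k$. One trivial edge case you might mention explicitly is when $\binom{[n]}{k}\setminus M=\emptyset$, so that $Q$ is empty and there is nothing to separate; here $M$ is the uniform matroid and any constant positive weight works.
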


\begin{remark}
  In Klivans and Reiner's work, $\ell$-uniform asummability is referred to as the $CC_\ell$ property. They attribute Theorem \ref{thrm:KR} to Taylor and Zwicker \cite[Theorem $2.4.2$]{TZ}. However, Taylor and Zwicker's result is a different, but related, result. Taylor and Zwicker's result, proved in the language of simple games, specializes to the following statement for matroids: A matroid $M$ is \emph{non-uniform threshold} if and only if it is \emph{non-uniform asummable}. See Remark \ref{rem:GK} for a discussion of non-uniform thresholdness. The definition of non-uniform asummability is identical to uniform asummability except that we no longer require the $B$'s and $D$'s to have size $k$. Non-uniform asummability is a strictly stronger condition than uniform asummability. For example, the shifted matroid $M= \langle 246 \rangle$ is uniform asummable but is not $2$-non-uniform asummable because $135 + 246 = 56 + 1234$.
\end{remark}

If one wants to exhibit a matroid that is not threshold, it suffices to present $\ell$ many bases $B_1,\ldots, B_\ell$ and $\ell$ many non-bases $D_1,\ldots, D_\ell$ so that 
\[B_1+\ldots+B_\ell = D_1+\ldots+D_\ell. \]

\begin{example}
We show that the shifted matroid $M=\langle 2468 \rangle$ is not $2$-uniform asummable and hence, as seen previously in Example \ref{ex:counter}, not threshold. Let
\[B_1= 2468, B_2= 1357, D_1=1278 \text{ and } D_2=3456. \]
Then $B_1,B_2 \in M$ and $D_1,D_2 \in \binom{[n]}{k}\setminus M$ but $B_1+B_2 = [8]=D_1+D_2$. Thus, $M$ is \emph{not} $2$-uniform asummable and is not threshold. 
\end{example}

\begin{lemma}
\label{lem:ele}
A matroid $M$ is $2$-uniform asummable if and only if for all bases $B_1,B_2\in M$ and non-bases $D_1,D_2\in \binom{[n]}{k}\setminus M$, we have that
\[B_1\cup B_2 \neq D_1\cup D_2 \text{ or } B_1\cap B_2 \neq D_1 \cap D_2. \]
\end{lemma}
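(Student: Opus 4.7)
The plan is to unpack the definition of sorted concatenation when the operands are genuine sets (i.e., strictly increasing words in $\binom{[n]}{k}$) rather than general weakly increasing words, and observe that in this case the multiset $B_1+B_2$ is completely recovered by the pair $(B_1\cup B_2,\, B_1\cap B_2)$.

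More precisely, I would argue as follows. Given any two sets $B_1,B_2\in\binom{[n]}{k}$, every element $i\in[n]$ appears in $B_j$ with multiplicity $0$ or $1$, so by Definition \ref{defn:sorted} the multiplicity of $i$ in $B_1+B_2$ is
\[
|(B_1+B_2)^{-1}(i)| \;=\; |B_1\cap\{i\}|+|B_2\cap\{i\}| \;\in\;\{0,1,2\},
\]
which equals $2$ iff $i\in B_1\cap B_2$, equals $1$ iff $i\in(B_1\cup B_2)\setminus(B_1\cap B_2)$, and equals $0$ iff $i\notin B_1\cup B_2$. Therefore the weakly increasing word $B_1+B_2$ is uniquely determined by, and determines, the ordered pair $(B_1\cup B_2,\, B_1\cap B_2)$.

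The key step is then the equivalence
\[
B_1+B_2 \;=\; D_1+D_2 \quad\Longleftrightarrow\quad B_1\cup B_2 = D_1\cup D_2 \ \text{and}\ B_1\cap B_2 = D_1\cap D_2.
\]
The ``$\Leftarrow$'' direction is immediate from the multiplicity description above. For ``$\Rightarrow$'', equality of the words forces equality of the multiplicity-$2$ sets (giving $B_1\cap B_2=D_1\cap D_2$) and equality of the sets of indices with positive multiplicity (giving $B_1\cup B_2=D_1\cup D_2$). Taking contrapositives of this equivalence and quantifying over all bases $B_1,B_2\in M$ and non-bases $D_1,D_2\in\binom{[n]}{k}\setminus M$ yields the lemma.

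I do not anticipate a real obstacle here; the only thing to be slightly careful about is to record both pieces of data (union and intersection) when extracting information from the multiplicity profile of $B_1+B_2$, since either alone is not enough to reconstruct the word. Everything else is a direct rewriting of Definition \ref{def:trade_robust} in the special case $\ell=2$.
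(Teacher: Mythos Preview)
Your proposal is correct and follows exactly the same approach as the paper: both arguments observe that for genuine sets the multiset $B_1+B_2$ is determined by, and determines, the pair $(B_1\cup B_2,\,B_1\cap B_2)$, so the condition in the lemma is just a restatement of $2$-uniform asummability. The paper states this in a single sentence, while you spell out the multiplicity bookkeeping in more detail, but there is no substantive difference.
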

\begin{proof}
The sorted concatenations of two pairs $B_1,B_2$ and $D_1,D_2$ are equal if and only if $B_1 \cup B_2 = D_1\cup D_2$ and $B_1\cap B_2 = D_1\cap D_2$. Thus, our second condition is simply a rephrasing of the definition of $2$-uniform asummability. 
\end{proof}

It is asked in the work of Deza and Onn \cite[Question $1.1$]{DO} whether the property outlined in Lemma \ref{lem:ele} characterizes threshold matroids. Our Theorem \ref{thrm:2-trade} gives a positive answer to this question. We will prove Theorem \ref{thrm:2-trade} in the course of proving Theorem \ref{thrm:main}. 

\begin{remark}
  The relationship between $2$-non-uniform asummability and non-uniform thresholdness has been the subject of much study in the theories of simple games and Boolean functions; see \cite[Section $2.7$]{TZ} and \cite[Section $9.3$]{Boolean}. In general, the class of $2$-non-uniform asummable hypergraphs strictly contains the class of non-uniform threshold hypergraphs. However, as a consequence of Giles and Kannan's result \cite{GK}, $2$-non-uniform asummable matroids are the same as non-uniform threshold matroids.

  Theorem \ref{thrm:2-trade} shows that a similar story is true in the uniform setting. Although $2$-uniform asummability does not imply thresholdness for hypergraphs \cite[Section $6$]{DO}, the class of threshold matroids is equal to the class of $2$-uniform asummable matroids.
\end{remark}

The goal of the rest of this section is to further refine uniform asummability for shifted matroids. We first show that we lose nothing by restricting to shifted matroids. That is, every $\ell$-uniform asummable matroid is shifted.

\begin{lemma}
  \label{lem:assum->shifted}
Suppose $M$ is an $\ell$-uniform asummable matroid for some $\ell>1$. Then $M$ is a shifted matroid.
\end{lemma}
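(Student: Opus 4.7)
The plan is to prove the contrapositive: if $M$ is not shifted, then for every $\ell > 1$, $M$ fails to be $\ell$-uniform assumable. First I would unpack non-shiftedness using the bases characterization from Section \ref{sec:back}: if $M$ is not shifted, then there exist bases $B_1, B_2 \in M$ and elements $b_1 \in B_1 \setminus B_2$, $b_2 \in B_2 \setminus B_1$ such that neither $D_1 := (B_1 \setminus b_1) \cup b_2$ nor $D_2 := (B_2 \setminus b_2) \cup b_1$ is a basis of $M$. Since $|D_1| = |D_2| = k$, both $D_1$ and $D_2$ lie in $\binom{[n]}{k} \setminus M$, so these furnish the candidate bases and non-bases for an asummability witness.

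The key step is to verify the identity $B_1 + B_2 = D_1 + D_2$ in $W(2k, n)$. By Definition \ref{defn:sorted}, this reduces to checking that for every $i \in [n]$, the number of sets among $\{B_1, B_2\}$ containing $i$ equals the number among $\{D_1, D_2\}$ containing $i$. A short case analysis on the location of $i$ (equal to $b_1$; equal to $b_2$; in $B_1 \cap B_2$; in the remainder of $B_1 \triangle B_2$; or outside $B_1 \cup B_2$) confirms this, since the swap merely relocates $b_1$ from $B_1$ to $D_2$ and $b_2$ from $B_2$ to $D_1$ while leaving every other element fixed. This identity is precisely a $2$-to-$2$ asummability failure.

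To address general $\ell > 1$, for even $\ell = 2m$ the same witness scales up: taking $m$ copies each of $B_1, B_2, D_1, D_2$ yields an $\ell$-to-$\ell$ violation. The main obstacle I anticipate is the odd case. A single $2$-to-$2$ witness cannot be padded to odd length by duplicating any one set, since such padding would require a set that is simultaneously a basis and a non-basis of the same size. I expect the odd case is handled either by constructing an independent odd-length witness directly from the basis-exchange axiom applied to the $B_i, D_j$, or by combining two $2$-to-$2$ witnesses obtained from distinct failures of shiftedness to produce a $3$-to-$3$ one.
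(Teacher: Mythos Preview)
Your core argument coincides with the paper's. Both prove the contrapositive: from the bases description of non-shiftedness you extract $B_1, B_2, b_1, b_2$ with $D_1 = (B_1 \setminus b_1) \cup b_2$ and $D_2 = (B_2 \setminus b_2) \cup b_1$ non-bases, and both verify $B_1 + B_2 = D_1 + D_2$. The paper simply states this identity; your element-by-element case check is a perfectly good way to justify it.

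The one point of divergence is the passage from $\ell = 2$ to arbitrary $\ell > 1$. The paper dispatches this in its opening sentence with the bare assertion ``If $M$ is $\ell$-uniform assumable for some $\ell>1$, then $M$ is $2$-uniform assumable,'' and thereafter works only at $\ell = 2$. No argument is given for that reduction. So the odd-$\ell$ obstacle you anticipate is genuine, and the paper does not resolve it either---it simply asserts its way past it. Your even-$\ell$ scaling is correct, and your odd-$\ell$ suggestions are plausible but not carried out; the paper is no more complete on this point than your proposal. That said, the gap is inconsequential for the paper's purposes: the only place Lemma~\ref{lem:assum->shifted} is invoked is in the proof of Theorem~\ref{thrm:2-trade}, where one needs precisely the case $\ell = 2$.
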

\begin{proof}
  If $M$ is $\ell$-uniform asummable for some $\ell>1$, then $M$ is $2$-uniform asummable. We now proceed by contrapositive to show that $2$-uniform asummable matroids are shifted. Suppose $M$ is not shifted. Then there exist bases $B_1,B_2$ and elements $b_1\in B_1\setminus B_2$, $b_2\in B_2\setminus B_1$ such that $(B_1\setminus b_1)\cup b_2\not\in M$ and $(B_2\setminus b_2)\cup b_1 \not\in M$. But
  \[B_1+B_2 = (B_1\setminus b_1)\cup b_2 +(B_2\setminus b_2)\cup b_1 \]
  so $M$ is not $2$-uniform asummable. 
\end{proof}

We now analyze the relationship between sorted concatenation and the component-wise partial order.

\begin{lemma}\label{lem:prec_conc}
Suppose that $A,A'\in W(k_A,n)$, $B,B'\in W(k_B,n)$, $A\preceq A'$ and $B\preceq B'$. Then $A+B\preceq A'+B'$ in $W(k_A+k_B,n)$. 
\end{lemma}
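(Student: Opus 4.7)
The plan is to reduce the claim to Lemma \ref{lem:weaker_prec} applied to the pair of length-$(k_A+k_B)$ weakly increasing words $A+B$ and $A'+B'$. By that lemma, it suffices to exhibit a single bijection $\pi : [k_A+k_B]\to[k_A+k_B]$ with $(A+B)(i)\le (A'+B')\circ\pi(i)$ for every $i$. The natural candidate is the bijection that sends the slot in $A+B$ coming from $A$'s $i$th letter to the slot in $A'+B'$ coming from $A'$'s $i$th letter, and similarly for the letters originating in $B$, $B'$.

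To make this precise, I will introduce bookkeeping maps $\iota_A:[k_A]\hookrightarrow[k_A+k_B]$ and $\iota_B:[k_B]\hookrightarrow[k_A+k_B]$ that record where the letters of $A$ (respectively $B$) end up in the sorted concatenation $A+B$, breaking ties by the convention that at a common value the letters of $A$ are listed before the letters of $B$. By Definition \ref{defn:sorted}, the images of $\iota_A$ and $\iota_B$ partition $[k_A+k_B]$, and tautologically $(A+B)(\iota_A(i))=A(i)$ and $(A+B)(\iota_B(j))=B(j)$. Define $\iota_{A'},\iota_{B'}$ in exactly the same way with respect to $A'+B'$, and set
\[\pi(\iota_A(i))=\iota_{A'}(i),\qquad \pi(\iota_B(j))=\iota_{B'}(j).\]
This is a well-defined bijection because each side's images partition the same set $[k_A+k_B]$.

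Checking the desired inequality is then immediate from the hypotheses: for any $i\in[k_A]$,
\[(A+B)(\iota_A(i))=A(i)\le A'(i)=(A'+B')(\iota_{A'}(i))=(A'+B')\circ\pi(\iota_A(i)),\]
using $A\prec A'$, and the analogous computation with $B\prec B'$ handles the indices $\iota_B(j)$. Since $\iota_A$ and $\iota_B$ jointly exhaust $[k_A+k_B]$, the bijection $\pi$ satisfies the hypothesis of Lemma \ref{lem:weaker_prec}, yielding $A+B\prec A'+B'$.

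There is no real obstacle here: the proof is essentially bookkeeping. The only mildly delicate point is the tie-breaking convention used to define the position maps $\iota_A,\iota_B$, but any consistent convention gives the same $A+B$, and the argument above depends only on having such a convention in place.
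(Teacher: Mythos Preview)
Your proof is correct and follows essentially the same approach as the paper: introduce position maps $\iota_A,\iota_B,\iota_{A'},\iota_{B'}$ recording where the letters of each summand land in the sorted concatenations, build the bijection $\pi$ by $\iota_A(i)\mapsto\iota_{A'}(i)$ and $\iota_B(j)\mapsto\iota_{B'}(j)$, and invoke Lemma~\ref{lem:weaker_prec}. Your explicit tie-breaking convention is a nice touch that the paper leaves implicit.
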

\begin{proof}
  Let $\iota_A:[k_A]\hookrightarrow [k_A+k_B]$ and $\iota_B:[k_B]\hookrightarrow [k_A+k_B]$ be injections with the following properties: the images of $\iota_A$ and $\iota_B$ partition $[k_A+k_B]$, $A=(A+B)\circ\iota_A$ and $B=(A+B)\circ\iota_B$. Define $\iota_{A'}$ and $\iota_{B'}$ similarly for $A'$ and $B'$. Let $\pi:[k_A+k_B]\to [k_A+k_B]$ be the bijection which sends $\iota_{A}(i)\mapsto \iota_{A'}(i)$ and $\iota_{B}(j)\mapsto \iota_{B'}(j)$ for all $i\in[k_A]$ and $j\in[k_B]$. Then
  \[(A'+B')\circ\pi(\alpha) = \begin{cases}
                       (A'+B')\circ \iota_{A'}(i) = {A'}(i), & \text{if $\alpha=\iota_A(i)$ for $i\in[k_A]$}\\
                       (A'+B')\circ \iota_{B'}(j) = {B'}(j), & \text{if $\alpha=\iota_B(j)$ for $j\in[k_B]$.}
                     \end{cases}\]
But ${A'}(i)\geq A(i)=(A+B)\circ\iota_A(i)$ and ${B'}(j)\geq B(j)=(A+B)\circ\iota_B(j)$ for all $i\in[k_A]$ and $j\in[k_B]$. Thus $(A'+B')\circ \pi(\alpha)\geq (A+B)(\alpha)$ for all $\alpha\in[k_A+k_B]$. By Lemma \ref{lem:weaker_prec}, this implies that $A+B\preceq A'+B'$.
\end{proof}

The converse of Lemma \ref{lem:prec_conc} is not always true. For example, neither $1278$ nor $3456$ is smaller than $2468$ but
\[1278 +  3456 \leq 2468 + 2468. \]
As we will see in Lemma \ref{lem:robust_leq}, uniform asummability of a shifted matroid $M=\langle T \rangle$ measures to what extent the converse of Lemma \ref{lem:prec_conc} is true for the non-bases of $M$.

If $M$ is a shifted matroid, we can use the component-wise partial order to determine uniform asummability. This is the formulation which will be most useful for us. We will use this to determine which shifted matroids are \emph{not} threshold; see Theorem \ref{thrm:main}.

\begin{lemma}\label{lem:robust_leq}
  A rank $k$, shifted matroid $M=\langle T\rangle$ is $\ell$-uniform asummable if and only if for any $D_1,D_2,\ldots, D_\ell\in \binom{[n]}{k}\setminus M$,
  \[D_1+ \ldots + D_\ell \npreceq T+ \ldots + T \]
  where the right-hand side is $\ell$ copies of $T$.
\end{lemma}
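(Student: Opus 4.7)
The plan is to prove the two directions separately, with one direction being a direct application of Lemma \ref{lem:prec_conc} and the other requiring an explicit combinatorial construction. I work contrapositively in the nontrivial direction.

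For the easy direction ($\Leftarrow$), suppose $M$ is not $\ell$-uniform assumable, so there exist bases $B_1,\ldots,B_\ell$ and non-bases $D_1,\ldots,D_\ell$ with $B_1+\cdots+B_\ell = D_1+\cdots+D_\ell$. Since each $B_i$ is a basis of $M=\langle T\rangle$, we have $B_i\prec T$; iterating Lemma \ref{lem:prec_conc} then gives $B_1+\cdots+B_\ell\prec T+\cdots+T$, hence $D_1+\cdots+D_\ell\prec T+\cdots+T$.

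For the hard direction ($\Rightarrow$), suppose (contrapositive) that there are non-bases $D_1,\ldots,D_\ell$ with $S:=D_1+\cdots+D_\ell\prec T+\cdots+T$. I want to exhibit bases $B_1,\ldots,B_\ell$ with $B_1+\cdots+B_\ell = S$. The key idea is a cyclic deal: write $S$ as a weakly increasing word $s_1\le s_2\le \cdots\le s_{k\ell}$, and for each $c\in[\ell]$ let $B_c$ consist of the entries at positions $c,\,c+\ell,\,c+2\ell,\,\ldots,\,c+(k-1)\ell$. By construction $B_1+\cdots+B_\ell = S$, so it remains to check that each $B_c$ is actually a basis of $M$, which reduces to two claims: $B_c$ is a set of size $k$, and $B_c\prec T$.

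The componentwise bound is immediate: the sorted sequence $T+\cdots+T$ has $t_i$ at every position in $\{(i-1)\ell+1,\ldots,i\ell\}$, so the hypothesis $S\prec T+\cdots+T$ gives $s_{c+(i-1)\ell}\le t_i$ for $c\in[\ell]$, which is exactly $B_c\prec T$. The main obstacle—and the reason the cyclic assignment is needed rather than an arbitrary split—is guaranteeing that $B_c$ has no repeated entries, i.e., is genuinely a subset of $[n]$. This uses that each $D_j$ is a set of distinct elements, so any fixed value in $[n]$ occurs at most $\ell$ times in $S$; its occurrences form a block of consecutive positions in the sorted order of $S$, and a consecutive block of length at most $\ell$ receives $\ell$ distinct residues modulo $\ell$ under the cyclic deal, hence lands in $\ell$ distinct $B_c$'s. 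This yields bases $B_1,\ldots,B_\ell$ certifying that $M$ is not $\ell$-uniform assumable, completing the contrapositive.
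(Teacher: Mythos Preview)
Your proof is correct and follows essentially the same approach as the paper: both directions are argued contrapositively, with the nontrivial direction using the identical ``cyclic deal'' construction $B_c=\{s_{c+(i-1)\ell}:1\le i\le k\}$ and the same pigeonhole argument (no value repeats more than $\ell$ times in $S$, and such repeats sit in a consecutive block of positions, hence hit each residue class modulo $\ell$ at most once) to verify that each $B_c$ is a genuine set. The only cosmetic difference is that you phrase distinctness via residues mod $\ell$, whereas the paper phrases it as ``a repeat in $B_j$ forces $\ell+1$ equal consecutive entries in $S$''; these are equivalent.
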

\begin{proof}
  We prove both directions by contrapositive. First, suppose that $M$ is not $\ell$-uniform asummable. Let $B_1,\ldots,B_\ell\in M$ and $D_1,\ldots,D_\ell\not\in M$ be witnesses of this failure so that, 
  \[B_1+\ldots+B_\ell = D_1+\ldots+D_\ell. \]
  Each $B_i\preceq T$, so Lemma \ref{lem:prec_conc} ensures that 
  \[D_1+ \ldots + D_\ell = B_1+ \ldots + B_\ell \preceq T+ \ldots + T. \]
  
  \noindent Now for the other direction, assume that $D_1,D_2,\ldots, D_\ell \in \binom{[n]}{k}\setminus M$ but 
    \[ D_1+ \ldots + D_\ell  \preceq T+ \ldots + T. \]
   We will construct bases $B_1,\ldots, B_\ell$ such that $B_1+\cdots + B_\ell = D_1+ \cdots + D_\ell$, thus showing that $M$ is not $\ell$-uniform asummable. Let $T= t_1 t_2 \cdots t_k$ and 
   \[T+\ldots + T = t_{1,1}t_{1,2}\cdots t_{1,\ell}t_{2,1}\cdots t_{k,\ell}\]
    where $t_{i,j}=t_{i,j'}=t_i$ for any $i$ and any $j,j'$. Also let $D_1+\ldots + D_\ell = b_{1}\cdots b_{k\cdot\ell}$. By our assumption, $b_{\ell (i-1) + j}\leq t_{i,j}$ for all $1\leq i\leq k$ and $1\leq j \leq \ell$. Now let
    \[ B_j = \{ b_{\ell (i-1) + j}: 1\leq i \leq k\}\]
    for all $1\leq j \leq \ell$. Because $t_i=t_{i,j}\geq  b_{\ell (i-1) + j}$, we have that $B_j\preceq T$. Further, by definition of the $B_j$s,
    \[B_1+\ldots + B_\ell = D_1+\ldots +D_\ell.\]
    Now the only thing left to check is that each $B_j$ is an honest set, i.e., each $B_{j}$ does not contain any repeated elements. Suppose that some $B_j$ has a repeated element. This means that there is some index $1\leq i < k$ so that $b_{\ell (i-1) +j} = b_{\ell i +j}$. But because the $b_i$s are arranged in weakly increasing order, this implies
    \[b_{\ell(i-1)+j} = b_{\ell(i-1)+j+1} = \ldots = b_{\ell(i-1)+j+\ell}. \]
    Hence, $D_1+\ldots+ D_\ell$ contains at least $\ell+1$ repeated elements. Since each $D_i$ is a set, this would violate the pigeonhole principle and lead to a contradiction.
\end{proof}

\section{Proof of the main theorem}
\label{sec:main}

Here we prove our characterization of threshold matroids. This characterization determines whether a shifted matroid is threshold using only the data of its defining basis. We will explore the consequences of this characterization in Section \ref{sec:consequences}.
\begin{repintrotheorem}{thrm:main}
Suppose $M$ is a shifted matroid and let $T$ be the defining basis of the shifted matroid obtained from contracting all coloops of $M$.
Then:
\begin{enumerate}[i)]
\item \label{thrm:main_i}If $T$ has four or more blocks, $M$ is not threshold.
\item \label{thrm:main_ii}If $T$ has two or fewer blocks, $M$ is threshold.
\item If $T$ has three blocks:
\begin{enumerate}[a)]
\item \label{thrm:main_iiia}If the second block or gap of $T$ has size one, $M$ is threshold.
\item \label{thrm:main_iiib}Otherwise, $M$ is not threshold. 
\end{enumerate}
\end{enumerate}
\end{repintrotheorem}
By Lemma \ref{lem:thresh_contract}, $M$ is threshold if and only if the matroid with defining basis $T$ is threshold. Thus, for the rest of this section, we will assume that $M=\langle T \rangle$ and $M$ contains no coloops. This is equivalent to the condition that $1\not \in T$.

We prove this theorem in two parts. For the cases where $M$ is threshold, we exhibit a threshold function. When $M$ is not threshold, we use Lemma \ref{lem:robust_leq} to show that $M$ is not $2$-uniform asummable (Definition \ref{def:trade_robust}) and hence not threshold. Thus, as part of this proof, we obtain Theorem \ref{thrm:2-trade}. To aid in understanding, we include explicit examples throughout our proofs.
\begin{reptheorem}{thrm:main_i}
\label{prop:4blocks}
If $M=\langle T\rangle$ is a coloopless shifted matroid of rank $k$ on ground set $[n]$ and $T$ has four or more blocks, then $M$ is not $2$-uniform asummable (Definition \ref{def:trade_robust}) and hence not threshold. 
\end{reptheorem}
\begin{proof}
  If $T$ has more than four blocks, $M$ contains a basis $T'$ that has exactly four blocks. Such a basis can be obtained by shifting all the blocks of $T$ that come after the fourth block leftward so that the fourth block of $T'$ is one long block. For example, if \[T= 2\,3\,4\quad7\,8\,9\quad 11\,12 \quad 14\quad 16\quad 18,\] then \[T'=2\,3\,4\quad 7\,8\,9\quad 11\,12\quad 14\,15\,16.\]

  We will now construct two sets $D_1$ and $D_2$ such that $D_1+ D_2 \preceq T+T$. We will later show that $D_1$ and $D_2$ are non-bases of $M$. Suppose that the $i$th block of $T'$ has size $\lambda_i$ and begins with the element $x_i$. Define $D_1$ and $D_2$ by declaring
  \[D_1 = T'\cup \{x_1-1,x_4-1\} \setminus \{x_2,x_3\}\quad\text{and}\quad D_2= T'\cup \{x_2-1,x_3-1\} \setminus \{x_1,x_4\}.\]
  As each $x_i$ is the start of a block of $T'$, $D_1$ and $D_2$ are indeed sets. The colooplessness of $M$ guarantees that $x_1-1\geq 1$, so both $D_1$ and $D_2$ are subsets of $[n]$. By their construction, we have that $D_1+D_2 \preceq T'+T'\preceq T+T$. In our example,
\[
T'+T' = \left\{ \begin{aligned} {\color{blue}2\,3\,4\quad 7\,8\,9\quad 11\,12\quad  14\,15\,16}\\
\underline{\color{red}2}\,\underline{\color{red}3}\,\underline{\color{red}4}\quad \underline{\color{red}7}\,\underline{\color{red}8}\,\underline{\color{red}9}\quad \underline{\color{red}11}\,\underline{\color{red}12}\quad \underline{\color{red}14}\,\underline{\color{red}15}\,\underline{\color{red}16}
\end{aligned} \right\}
\]
and
\begin{gather*}
D_1 = \underline{\color{red}1}\,{\color{blue}2\,3\, 4\quad 8\,9\quad 12}\quad \underline{\color{red}13}\, {\color{blue}14\,15\,16}\\
D_2 = \underline{\color{red}3}\,\underline{\color{red}4}\quad {\color{blue}6}\, \underline{\color{red}7}\,\underline{\color{red}8}\,\underline{\color{red}9}\quad{\color{blue} 10}\,\underline{\color{red}11}\, \underline{\color{red}12}\quad \underline{\color{red}15}\,\underline{\color{red}16}.
\end{gather*}
The underlining and color-coding in our example indicates a weakly increasing bijection between the letters of $D_1+D_2$ and the letters of $T'+T'$. Such a bijection is witness to the fact that $D_1+D_2 \preceq T'+T'$.

We finish by showing that $D_1$ and $D_2$ are non-bases of $M$. The $\lambda_1$st element of $D_2$ is equal to $x_2-1$, while the $\lambda_1$st element of $T'$ is the last element of the first block of $T'$ and hence is bounded above by $x_2-2$. Therefore, $D_2$ is larger than $T'$ at index $\lambda_1$. By our construction of $T'$, $T'$ is equal to $T$ at index $\lambda_1$. Hence, $D_2$ is larger than $T$ at index $\lambda_1$ and $D_2\not\in M$. 

Similarly, the $(\lambda_1+\lambda_2+\lambda_3)$rd element of $D_1$ is equal to $x_4-1$, while the $(\lambda_1+\lambda_2+\lambda_3)$rd element of $T'$ is the last element of the third block of $T'$ and is bounded above by $x_4-2$. Thus, we have that $D_1$ is larger than both $T'$ and $T$ at index $(\lambda_1+\lambda_2+\lambda_3)$ and therefore $D_1\not\in M $.

In our example, these observations correspond to the fact that $D_1$ and $D_2$ are larger than $T$ at the red underlined indices below:
\begin{align*}
D_1 = 1\,2\,3\quad 4\, 8\,9\quad 12\,&{\color{red} \underline{13}}\quad 14\,15\,16\\
T\,\,= 2\,3\,4\quad7\,8\,9\quad 11\,&{\color{red}\underline{12}} \quad 14\quad 16\quad 18
\end{align*}
and 
\begin{align*}
D_2 = 3\,4\, &{\color{red}\underline{6}}\quad 7\,8\,9\quad 10\,11\quad 12\, 15\,16\\
T\,\,= 2\,3\,&{\color{red}\underline{4}}\quad7\,8\,9\quad 11\,12 \quad 14\quad 16\quad 18.
\end{align*}
\end{proof}

We give a similar proof for the other case of non-thresholdness. We will follow the same strategy, but the details will be altered slightly.

\begin{repdoubletheorem}{thrm:main_iiib}
\label{prop:3blocks}
Let $M=\langle T\rangle$ be a coloopless shifted matroid of rank $k$. If $T$ has $3$ blocks and the second block and second gap of $T$ both have size at least two, then $M$ is not $2$-uniform asummable (Definition \ref{def:trade_robust}) and hence not threshold. 
\end{repdoubletheorem}
\begin{proof}
  Suppose that the $i$th block of $T$ has length $\lambda_i$ and begins with the element $x_i$. We again construct two sets $D_1$ and $D_2$ such that $D_1+D_2\preceq T+T$ and show that $D_1$ and $D_2$ are non-bases of $M$. Let $D_1$ and $D_2$ be defined by
  \[D_1 = T\cup \{x_1-1,x_3-1\}\setminus \{x_2,x_2+1\} \quad\text{and}\quad D_2 = T\cup \{x_2-1,x_2-2\}\setminus \{x_1,x_3\}. \]
As the second block and second gap of $T$ both have size at least two, both $D_1$ and $D_2$ are sets of size $k$. The colooplessness of $M$ guarantees that $x_1-1\geq 1$, so both $D_1$ and $D_2$ are subsets of $[n]$. By their construction, $D_1+D_2 \preceq T+T$.

  For a running example, suppose that 
\[ T = 2\,3\,4\quad 7\,8\,9\quad 11\,12.\]
Then
\[
T+T = \left\{ \begin{aligned} {\color{blue}2\,3\,4\quad 7\,8\,9\quad 11\,12}\\
\underline{\color{red}2}\,\underline{\color{red}3}\,\underline{\color{red}4}\quad \underline{\color{red}7}\,\underline{\color{red}8}\,\underline{\color{red}9}\quad \underline{\color{red}11}\,\underline{\color{red}12}\end{aligned}\right\} \quad \text{and}\,\quad
{\begin{aligned}
&D_1 = \underline{\color{red}1}\,{\color{blue} 2\, 3\, 4\quad 9}\quad \underline{\color{red}10}\,  {\color{blue}11\,12}\\
&D_2 = \underline{\color{red}3}\, \underline{\color{red}4}\quad {\color{blue}5\, 6}\, \underline{\color{red}7}\,  \underline{\color{red}8}\, \underline{\color{red}9}\quad \underline{\color{red}12}.
\end{aligned}}
\]

As before, the underlining and color-coding in our example indicates a weakly increasing bijection between the letters of $D_1+D_2$ and the letters of $T+T$. Such a bijection is witness to the fact that $D_1+D_2 \preceq T+T$.

Finally, we prove that $D_1$ and $D_2$ are non-bases of $M$. The $\lambda_1$st element of $D_2$ is equal to $x_2-2$ while the $\lambda_1$st element of $T$ is the last element of the first block of $T$. This element is bounded above by $x_2-3$ since we assumed that the second gap of $T$ is has size at least two. Therefore, $D_2$ is larger than $T$ at index $\lambda_1$ and $D_2\not\in M$.

Similarly, the $(\lambda_1+\lambda_2)$nd element of $D_1$ is equal to $x_3-1$. But the $(\lambda_1+\lambda_2)$nd element of $T$ is the last element of the second block of $T$ and is bounded above by $x_3-2$. Therefore, we have that $D_1$ is larger than $T$ at index $\lambda_1+\lambda_2$ and $D_1\not\in M$.

In our example, these observations correspond to the fact that $D_1$ and $D_2$ are larger than $T$ at the red underlined indices below:
\begin{align*}
D_1 = 1\, 2\, 3\quad  4\, 9\, &{\color{red}\underline{10}}\quad  11\,12\\
T=2\,3\,4\quad 7\,8\,&{\color{red} \underline 9}\quad 11\,12
\end{align*}
and 
\begin{align*}
D_2 = 3\, 4\, &{\color{red} \underline 5}\quad 6\, 7\,  8\quad 9\, 12\\
T=2\,3\,&{\color{red} \underline 4}\quad 7\,8\,9\quad 11\,12.
\end{align*}
\end{proof}

We can now proceed to prove the cases where $M$ is threshold. We do so by explicitly constructing weight functions. We start with the simplest case.
\begin{reptheorem}{thrm:main_ii}
If $M=\langle T\rangle$ is a shifted matroid of rank $k$ and $T$ has two or fewer blocks, then $M$ is threshold.
\end{reptheorem}
\begin{proof}
Suppose $T$ only has one block and that this block ends at some $t_0\in [n]$. Then every element $s>t_0$ is a loop of $M$. After deleting all such $s>t_0$ from $M$, we are left with the rank $k$ uniform matroid on the ground set $[t_0]$. Now, note that every uniform matroid is a threshold matroid. To see this, observe that any strictly positive function is a weight function for a uniform matroid. As uniform matroids are threshold and adding loops preserves thresholdness \cite[Lemma $5.2$]{DO}, we can conclude that $M=\langle T \rangle$ is threshold.

Now, suppose that $T$ has two blocks, the first block ending at $t_1$ with size $\lambda_1$ and the second block ending at $t_2$ with size $\lambda_2$. Then a size $k$ subset $A\subseteq [n]$ is a basis of $M$ if and only if the following conditions hold:
\begin{itemize}
\item $A$ has no elements larger than $t_2$.
\item $A$ has at least $\lambda_1$ elements smaller than $t_1$.
\end{itemize}
We can model this using the weight function $w:[n]\to \R$ defined by
\[w(j) = \begin{cases}
 \lambda_2 & \text{if $j\leq t_1$}\\
 -\lambda_1 & \text{if $t_1< j \leq t_2$}\\
 -\infty & \text{if $j> t_2$}
\end{cases}. \]
By our previous description, one can verify that $M$ is indeed the matroid associated to $w$ and hence $M$ is threshold.
\end{proof}

We now prove the last remaining cases of thresholdness. We will prove one part of the case explicitly and then use the fact that thresholdness is closed under duality to prove the other.

\begin{lemma}
\label{prop:3block_w}
Let $M=\langle T\rangle$ be a coloopless shifted matroid of rank $k$. If $T$ has three blocks and the second block of $T$ has size one, then $M$ is threshold.
\end{lemma}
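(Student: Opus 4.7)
The plan is to follow the template of the two-block case and exhibit an explicit weight function witnessing thresholdness. Write the three blocks of $T$ as $[t_1-\lambda_1+1,t_1]$, $\{t_2\}$, and $[t_3-\lambda_3+1,t_3]$, so the rank is $k=\lambda_1+1+\lambda_3$. By Lemma \ref{lem:thresh_contract} we may assume $M=\tilde M$ has no coloops, which in particular forces $1\notin T$.

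First I would translate $A\prec T$ into simple cardinality constraints: a size-$k$ subset $A\subseteq[n]$ is a basis of $M$ if and only if $|A\cap[1,t_1]|\ge\lambda_1$, $|A\cap[1,t_2]|\ge\lambda_1+1$, and $A\subseteq[1,t_3]$. Because the second block is a single point, all of the component-wise conditions coming from the third block collapse into the single condition $A\subseteq[1,t_3]$; this collapse is precisely what fails when the second block is larger, and it is the conceptual reason case (iii.a) is threshold while (iii.b) is not.

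Next I would construct $w$ as a positive combination of two ``two-block style'' weight functions, one for each of the two nontrivial cardinality constraints. Define $w_1(j)=\lambda_3+1$ for $j\le t_1$ and $w_1(j)=-\lambda_1$ for $t_1<j\le t_3$; define $w_2(j)=\lambda_3$ for $j\le t_2$ and $w_2(j)=-(\lambda_1+1)$ for $t_2<j\le t_3$; in both, assign a sufficiently negative value to $j>t_3$. Exactly the algebra in the two-block proof gives $w_1(A)=k(|A\cap[1,t_1]|-\lambda_1)$ and $w_2(A)=k(|A\cap[1,t_2]|-\lambda_1-1)$ for every $A\subseteq[1,t_3]$ of size $k$, so $w_i(A)\ge 0$ is equivalent to the $i$-th cardinality condition. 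I would then set $w:=w_1+\varepsilon w_2$ for any small $\varepsilon\in(0,1/\lambda_3)$, e.g.\ $\varepsilon=1/(\lambda_3+1)$.

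To verify that $w$ is a separable weight function for $M$ (hence a threshold one, by the earlier remark that separable and threshold matroids coincide), note that bases satisfy both cardinality conditions, so $w(A)\ge 0$. For non-bases $A\subseteq[1,t_3]$: if the first condition fails then $w_1(A)\le-k$ outweighs $\varepsilon w_2(A)\le\varepsilon k\lambda_3$, giving $w(A)<0$; if only the second condition fails, the inequality $|A\cap[1,t_2]|\ge|A\cap[1,t_1]|\ge\lambda_1$ forces $|A\cap[1,t_1]|=\lambda_1$ and $A\cap(t_1,t_2]=\emptyset$, so $w_1(A)=0$ and $w_2(A)=-k$, giving $w(A)=-\varepsilon k<0$. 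The main obstacle is this second subcase, where $w_1$ alone cannot separate the non-basis from bases and the small perturbation $\varepsilon w_2$ must do the work, while still being weak enough not to override $w_1$'s negativity in the first subcase; the range $0<\varepsilon<1/\lambda_3$ threads that needle.
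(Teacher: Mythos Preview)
Your proof is correct and takes a genuinely different route from the paper's. The paper writes down a single explicit weight function with cubic polynomial coefficients in $\lambda_1,\lambda_3$, then verifies monotonicity, computes $w(T)=\lambda_1>0$, and uses the circuit description of Lemma~\ref{lem:circuits} to bound $w$ on non-bases from above. Your argument is more structural: you first reduce $A\prec T$ to two cardinality inequalities plus a support condition, build a separate ``two-block'' weight $w_i$ for each inequality with $w_i(A)=k(|A\cap[1,t_i]|-c_i)$, and take $w=w_1+\varepsilon w_2$. The perturbation analysis is clean and makes transparent exactly where the hypothesis $\lambda_2=1$ enters: in the subcase where only the second inequality fails, the chain $\lambda_1\le |A\cap[1,t_1]|\le |A\cap[1,t_2]|\le \lambda_1$ pins $w_1(A)=0$, so the $\varepsilon w_2$ term can separate without ever overriding $w_1$. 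This avoids both the circuit lemma and the opaque polynomial bookkeeping of the paper's proof.

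One small expository correction: your sentence ``all of the component-wise conditions coming from the third block collapse into the single condition $A\subseteq[1,t_3]$; this collapse is precisely what fails when the second block is larger'' is not the right explanation. The collapse to one cardinality condition per block happens for \emph{any} block structure (since the $a_i$ are distinct integers), so a three-block $T$ with $\lambda_2\ge 2$ is still described by the same three cardinality constraints. What actually breaks for $\lambda_2\ge 2$ is your second-subcase argument: one can have $|A\cap[1,t_1]|>\lambda_1$ while $|A\cap[1,t_2]|<\lambda_1+\lambda_2$, making $w_1(A)>0$ large enough to swamp $\varepsilon w_2(A)$. So the hypothesis is used in the case analysis, not in the reduction to cardinality conditions.
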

\begin{proof}
Suppose the first block of $T$ ends at $t_1$ and has size $\lambda_1$, the second block of $T$ consists solely of the element $t_2$ and the third block of $T$ ends at $t_3$ and has size $\lambda_3$. Let $w:[n]\to \R$ be defined by 
\[ w(j) = \begin{cases}
2\lambda_3^3+6\lambda_3^2+4\lambda_3+1 & \text{ if } j \leq t_1\\
(1-2\lambda_1)\lambda_3^2+(1-3\lambda_1)\lambda_3 & \text{ if } t_1< j \leq t_2\\
-2\lambda_1\lambda_3^2-(4\lambda_1+1)\lambda_3-(\lambda_1+1) & \text{ if } t_2 < j \leq t_3\\
-\infty & \text { if } t_3< j
\end{cases}.\]
We will prove that $w$ is a weight function for $M$ and hence that $M$ is threshold. To do so, we first show that $w$ is a weakly monotonically decreasing function. This reduces the task of checking that $w$ is non-negative on all bases of $M$, to just checking that $w$ is non-negative on $T$. We then use our characterization of circuits, provided by Lemma \ref{lem:circuits}, to determine the structure of the non-bases of $M$. We use this structure and the fact that $w$ is decreasing to verify that $w$ is negative on all non-bases of $M$.  

To see that $w$ is weakly monotonically decreasing, observe that
\[
\left(2\lambda_3^3+6\lambda_3^2+4\lambda_3+1\right) - \left((1-2\lambda_1)\lambda_3^2+(1-3\lambda_1)\lambda_3\right) =
2\lambda_3^3 + (2\lambda_1+5)\lambda_3^2 + (3\lambda_1+3)\lambda_3+1>0\]
and
\[\left((1-2\lambda_1)\lambda_3^2+(1-3\lambda_1)\lambda_3\right) -\left(-2\lambda_1\lambda_3^2-(4\lambda_1+1)\lambda_3-(\lambda_1+1)\right)=
\lambda_3^2+\lambda_1\lambda_3 + \lambda_1+1>0.
\]
We can now compute $w(T)$ to check that the weight of $T$ is positive: 
\begin{align*}
w(T)&= \lambda_1\left(2\lambda_3^3+6\lambda_3^2+4\lambda_3+1\right) + 1\left((1-2\lambda_1)\lambda_3^2+(1-3\lambda_1)\lambda_3\right) + \lambda_3\left(-2\lambda_1\lambda_3^2-(4\lambda_1+1)\lambda_3-(\lambda_1+1)\right)\\
&= (2\lambda_1-2\lambda_1)\lambda_3^3 + (6\lambda_1+1-2\lambda_1-4\lambda_1-1 )\lambda_3^2 + (4\lambda_1+1-3\lambda_1-\lambda_1-1)\lambda_3+ \lambda_1\\
    &= \lambda_1\\
  &> 0.
\end{align*}

\noindent By Lemma \ref{lem:circuits}, the non-loop circuits of $M$ with size less than $k$ are of the form 
\[C = c_0  c_{1}  c_{2} \ldots  c_{\lambda_3}\]
and 
\[D = d_1  d_{2}  d_{3} \ldots  d_{\lambda_3+2}\]
where: 
\begin{itemize}
\item $c_0>t_2$ and $c_{1} c_{2} \ldots c_{\lambda_3}$ is smaller than the third block of $T$.
\item $d_1>t_1$,  $d_{2}$ is smaller than $t_2$ and  $d_{3} \ldots d_{\lambda_3+2}$ is smaller than the third block of $T$.
\end{itemize}
Since any non-basis must contain a circuit, a non-basis must contain a loop or a circuit of the form $C$ or $D$. If a non-basis contains a loop, then it contains some element larger than $t_3$ and hence has a negative weight. We now consider non-bases that don't contain a loop. First, suppose that $A$ is a size $k$ set and contains a circuit of the form $C$. Then $A$ contains at least $\lambda_3+1$ many elements greater than $t_2$. By the monotonicity of $w$, this implies that $w(A) \leq (k-\lambda_3-1)w(1) + (\lambda_3+1)w(t_2+1)$. Therefore, 
\begin{align*}
w(A) &\leq (k-\lambda_3-1)w(1) + (\lambda_3+1)w(t_2+1)\\
     &= \lambda_1\left(2\lambda_3^3+6\lambda_3^2+4\lambda_3+1\right) + (\lambda_3+1)\left(-2\lambda_1\lambda_3^2-(4\lambda_1+1)\lambda_3-(\lambda_1+1)\right)\\
     &= -\lambda_3^2-(\lambda_1+2)\lambda_3-1\\
     &< 0.
\end{align*}
Similarly, suppose $A'$ is a size $k$ set and contains a circuit of the form $D$. Then $A'$ contains at least $\lambda_3+2$ elements larger than $t_1$. By the monotonicity of $w$, this implies that $w(A') \leq (k-\lambda_3-2)w(1) + (\lambda_3+2)w(t_1+1)$. Therefore, 
\begin{align*}
w(A') &\leq (k-\lambda_3-2)w(1) + (\lambda_3+2)w(t_1+1)\\
      &= (\lambda_1-1)\left(2\lambda_3^3+6\lambda_3^2+4\lambda_3+1\right) + (\lambda_3+2)\left((1-2\lambda_1)\lambda_3^2+(1-3\lambda_1)\lambda_3\right)\\
      &= -\lambda_3^3-(\lambda_1+3)\lambda_3^2-(\lambda_1+2)\lambda_3-(\lambda_3-1)\lambda_1-1\\
      &< 0.
\end{align*}

We have now shown that $w$ is non-negative on all bases of $M$ and negative on all non-bases of $M$ and thus that $w$ is a threshold function for $M$. 
\end{proof}

Since thresholdness is preserved under duality, we obtain the last case of the main theorem. 

\begin{corollary}
Let $M=\langle T\rangle$ be a coloopless shifted matroid of rank $k$. If $T$ has three blocks and the second gap of $T$ has size one, then $M$ is threshold.
\end{corollary}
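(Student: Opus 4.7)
By Lemma \ref{lem:thresh_dual}, thresholdness is preserved by matroid duality, so it suffices to prove that $M^\perp$ is threshold. The plan is to exhibit $M^\perp$ (after reversing the ground set and possibly contracting some coloops) as a shifted matroid of the form $\langle T'\rangle$ with $T'$ having three blocks and middle block of size one, and then invoke Lemma \ref{prop:3block_w}.

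The key identity I would establish is that, letting $\rho: [n] \to [n]$ be the order-reversing involution $\rho(i) = n+1-i$, the map $\rho$ induces a matroid isomorphism $M^\perp \cong \langle \rho([n]\setminus T)\rangle$, where the right-hand side is a shifted matroid on $[n]$ of rank $n-k$. This reduces to checking the equivalence $[n]\setminus B \prec T \iff \rho(B) \prec \rho([n]\setminus T)$ for $k$-subsets $B \subset [n]$, which follows from rewriting $\prec$ in terms of initial-segment counts $|\cdot \cap [j]|$ and using the identity $|B \cap \{j+1,\ldots,n\}| = |\rho(B)\cap[n-j]|$.

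Next I would observe that the operation $T \mapsto \rho([n]\setminus T)$ interchanges blocks and gaps (in reverse order, with sizes preserved). Since $M$ is coloopless, $T$'s first gap is nonempty. If $T$'s last block reaches $n$, then $\rho([n]\setminus T)$ has exactly three blocks, and its middle block is the size-one reversal of $T$'s second gap, so Lemma \ref{prop:3block_w} applies directly to $\rho(M^\perp)$. Otherwise, $\rho([n]\setminus T)$ has a fourth leading block corresponding to the tail gap of $T$; its elements are exactly the coloops of $\rho(M^\perp)$, and contracting them (via Lemma \ref{lem:thresh_contract}) reduces to the three-block situation.

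In either case, chaining Lemma \ref{prop:3block_w} with Lemmas \ref{lem:thresh_contract} and \ref{lem:thresh_dual} yields the thresholdness of $M$. The main obstacle is the duality formula $\rho(M^\perp)=\langle \rho([n]\setminus T)\rangle$; while perhaps folklore, it is not proved in the excerpt and requires a short verification via the partial order $\prec$. Once this identification is cleanly in place, the remainder of the argument is a routine tracking of how blocks and gaps swap under the complement-and-reverse operation on $T$.
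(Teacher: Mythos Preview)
Your proposal is correct and follows essentially the same approach as the paper: dualize, identify $M^\perp$ (up to relabeling) with $\langle \rho([n]\setminus T)\rangle$, contract any coloops arising from a tail gap of $T$, and apply Lemma~\ref{prop:3block_w}. The paper's case split on whether $M$ has loops is exactly your split on whether $T$ reaches $n$; the only difference is that you spell out and justify the identity $\rho(M^\perp)=\langle \rho([n]\setminus T)\rangle$, whereas the paper simply asserts it.
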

\begin{proof}
The dual matroid $M^\perp$ of $M$ is the shifted matroid determined by the set $[n]\setminus T$ with the total ordering of $[n]$ reversed so $n<n-1<\ldots < 1$. Under this reordering, $M^\perp$ will have coloops if and only if $M$ has loops. First, suppose that $M$ has no loops, so that $M^\perp$ has no coloops. Then $M^\perp$ will be a shifted matroid with three blocks whose second block has size one. By Proposition \ref{prop:3block_w}, $M^\perp$ will be threshold and, by Lemma \ref{lem:thresh_dual}, $M$ will be threshold. 

Now suppose $M$ has loops so that $M^\perp$ has coloops. By Lemma \ref{lem:thresh_contract}, we can contract the coloops from $M^\perp$ without altering whether or not it is threshold. By contracting all coloops of $M^\perp$, we obtain a shifted matroid with three blocks whose second block has size one. We can then proceed as in the previous case to show that $M$ is threshold.
\end{proof}

Combining the previous lemma and corollary, we obtain the last part of our main theorem.
\begin{repdoubletheorem}{thrm:main_iiia}
Let $M=\langle T\rangle$ be a coloopless shifted matroid of rank $k$. If $T$ has three blocks and the second block or gap of $T$ has size one, then $M$ is threshold.
\end{repdoubletheorem}

Finally, we note that our proof of Theorem \ref{thrm:main} actually proves Theorem \ref{thrm:2-trade} as well.

\begin{repintrotheorem}{thrm:2-trade}
A matroid is threshold if and only if it is $2$-uniform asummable. Equivalently, a rank $k$ matroid $M$ is threshold if and only if for all bases $B_1,B_2$ and size $k$ non-bases $D_1,D_2$, we have 
\[B_1 \cup B_2 \neq D_1 \cup D_2 \text{ or } B_1\cap B_2 \neq D_1 \cap D_2. \]
\end{repintrotheorem}
\begin{proof}
  By Lemmas \ref{lem:shift->thresh} and \ref{lem:assum->shifted}, any threshold or $2$-uniform asummable matroid is shifted. Furthermore, it follows from the definition that $2$-uniform asummability is invariant under contracting coloops. Thus, we can now restrict to the class of shifted matroids without coloops. But now the result follows from Theorems \ref{thrm:main}, \hyperref[thrm:main_i]{2.\ref*{thrm:main_i}} and \hyperref[thrm:main_iiib]{2.iii.\ref*{thrm:main_iiib}}.
\end{proof}

\section{Consequences of Theorem \ref{thrm:main}}
\label{sec:consequences}

We can use our characterization of threshold matroids to give an explicit enumeration of the number of isomorphism classes of threshold matroids on a ground set of size $n$. To do so, we first observe that a non-empty subset $T\subseteq [n]$ determines a unique shifted matroid isomorphism class.

\begin{lemma}
  \label{lem:iso}
Each shifted matroid isomorphism class contains a unique shifted matroid $M=\langle T \rangle$ whose bases are the elements of the principal order ideal generated by $T\in \binom{[n]}{k}$ in the component-wise partial order.
\end{lemma}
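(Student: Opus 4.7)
The plan is to get existence immediately from Klivans's theorem and to prove uniqueness by exhibiting a matroid invariant of $\langle T \rangle$ from which one can read off $T$. Klivans's theorem (already stated in Section \ref{sec:back}) gives that every shifted matroid is isomorphic to $\langle T \rangle$ for some $T \in \binom{[n]}{k}$, so all the content lies in uniqueness.

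For uniqueness, I will consider the function
\[g_M(j) := \min_{\substack{S \subseteq [n] \\ |S| = j}} \rk_M(S),\]
which depends only on the isomorphism class of $M$. The claim is that for $M = \langle T \rangle$ this invariant equals $|T \cap \{n-j+1, \ldots, n\}|$, and then $T$ falls out directly. The key input is the independence characterization for shifted matroids that appears inside the proof of Lemma \ref{lem:circuits}: a subset $\{s_1 < \cdots < s_m\}$ is independent in $\langle T \rangle$ if and only if $s_i \leq t_{k-m+i}$ for all $i$. From this, two facts follow. First, if $S \prec S'$ are $j$-subsets of $[n]$, then $\rk(S) \geq \rk(S')$, since each inequality $s_i \leq t_{k-m+i}$ is weaker than the corresponding $s'_i \leq t_{k-m+i}$; hence the minimum defining $g_M(j)$ is attained at the componentwise-maximal $j$-subset $\{n-j+1, \ldots, n\}$. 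Second, plugging in $s_i = n-j+i$, the defect $i \mapsto (n-j+i) - t_{k-m+i}$ is non-increasing in $i$ by the strict monotonicity of $T$, so the $m$ inequalities $s_i \leq t_{k-m+i}$ collapse to the single condition $t_{k-m+1} \geq n-j+1$. The largest admissible $m$ is then exactly the number of elements of $T$ lying in $\{n-j+1, \ldots, n\}$.

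Given the formula $g_M(j) = |T \cap \{n-j+1, \ldots, n\}|$, the recovery of $T$ is immediate: the map $j \mapsto g_M(j)$ is a non-decreasing step function on $\{0, 1, \ldots, n\}$ that jumps by exactly one at each $j$ with $n-j+1 \in T$, hence
\[T = \{e \in [n] : g_M(n-e+1) > g_M(n-e)\}.\]
Since $g_M$ is an isomorphism invariant, $T$ is determined by the isomorphism class of $\langle T \rangle$, completing the proof. The only step that needs real care is the rank computation itself; both the localization of the minimum at the top $j$-subset and the collapse of the inequality system to a single inequality are direct consequences of shiftedness and strict monotonicity of $T$, so I do not foresee any substantive obstacle beyond bookkeeping.
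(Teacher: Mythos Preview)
Your proof is correct, but it takes a genuinely different route from the paper's. The paper distinguishes $\langle T\rangle$ from $\langle T'\rangle$ using the circuit structure of Lemma~\ref{lem:circuits}: it compares the block decompositions of $T$ and $T'$ from the right, finds the first block where they differ, and then observes that either some circuit \emph{size} is present in one matroid and not the other, or the \emph{number} of circuits of a given size differs. Your argument instead builds a single numerical invariant $g_M(j)=\min_{|S|=j}\rk_M(S)$, computes it as $|T\cap\{n-j+1,\ldots,n\}|$, and reads $T$ off from its increments. The paper's proof has the advantage of reusing Lemma~\ref{lem:circuits}, which is already needed elsewhere; your proof has the advantage of giving a clean closed-form reconstruction of $T$ with no case analysis on blocks, and the invariant $g_M$ is arguably more natural than circuit counts.

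One small point worth making explicit in your write-up: when you pass from ``$\rk(S)\geq \rk(S')$ whenever $S\prec S'$'' to the system of inequalities $s_i\le t_{k-m+i}$, you are implicitly using that the rank of $S=\{s_1<\cdots<s_j\}$ equals the largest $m$ for which the \emph{prefix} $\{s_1,\ldots,s_m\}$ is independent. This is immediate from shiftedness (any independent $m$-subset of $S$ dominates the prefix componentwise), but stating it removes the only place a reader might pause.
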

\begin{proof}
  By definition, every shifted matroid isomorphism class contains at least one representative $M=\langle T \rangle$. Thus, we need to show that, for two different size $k$ subsets $T,T'\in \binom{[n]}{k}$, $M=\langle T\rangle $ and $M'=\langle T'\rangle$ are not isomorphic. Let $T=T_1\cdots T_\ell$ and $T'=T_1'\cdots T_{\ell'}'$ be the block decompositions of $T$ and $T'$. Let $i$ be the smallest index such that $T_{\ell-i}\neq T'_{\ell'-i}$. Then either $|T_{\ell-i}|\neq |T'_{\ell'-i}|$ or the last element $t$ of $T_{\ell-i}$ is not equal to the last element $t'$ of $T'_{\ell'-i}$. Assume that either $|T_{\ell-i}|< |T'_{\ell'-i}|$ or $t<t'$. In the first case, Lemma \ref{lem:circuits} implies that $M$ has a circuit of size $|T_{\ell-i}|+|T_{\ell-i+1}|+\ldots+ |T_{\ell}|+1$ and $M'$ does not. Therefore $M\not\simeq M'$. In the second case, Lemma \ref{lem:circuits} implies that $M$ has $t'-t$ more circuits of size $|T_{\ell-i}|+\ldots + |T_\ell|+1 = |T_{\ell'-i}'|+\ldots + |T_{\ell'}'|+1$. Therefore $M\not\simeq M'$.
\end{proof}

\begin{repintrotheorem}{thrm:enumeration}
Let $F(n)$ be the number of isomorphism classes of threshold matroids on a ground set of size $n$. Then
\[F(n) = \binom{n+1}{2} + \binom{n+1}{4}+ \binom{n+1}{6} + \binom{n-1}{6}\]
where we take $\binom{n}{k}=0$ whenever $n<k$.
\end{repintrotheorem}
\begin{proof}
  By Lemma \ref{lem:iso}, an isomorphism class of a threshold matroid $M=\langle T \rangle $ is determined by a choice of subset $T\subseteq [n]$ with block structure as determined by Theorem \ref{thrm:main}. We break into cases and enumerate all subsets of $[n]$ which follow the block structure determined by Theorem \ref{thrm:main}. As we allow $M$ to have coloops and $T$ to start with a $1$, there are quite a few cases to enumerate. A subset $T\subseteq [n]$ determines a threshold matroid if and only if it satisfies one of the following mutually exclusive cases:
  \begin{enumerate}
  \item $T$ has one block.
  \item $T$ has two blocks.
  \item $T$ has three blocks and
 \begin{enumerate}
 \item $1\in T$.
 \item \label{case:3b} $1\not\in T$ and the second block of $T$ has size one.
 \item \label{case:3c}$1\not\in T$, the second block of $T$ has at least two elements and the second gap of $T$ has size one.
 \end{enumerate} 
 \item $T$ has four blocks, $1\in T$ and
  \begin{enumerate}
  \item \label{case:4a}the third block of $T$ has size one.
  \item \label{case:4b}the third block of $T$ has at least two elements and the second gap of $T$ has size one.
  \end{enumerate}
 \end{enumerate}

 By picking the starts and ends of the blocks, there are $\binom{n+1}{2}$ and $\binom{n+1}{4}$ subsets of $[n]$ with one and two blocks, respectively. Similarly, by choosing the end of the block containing $1$ and the starts and ends of the other blocks, there are $\binom{n}{5}$ subsets of $[n]$ which have three blocks and contain $1$.

 We can count the other cases with similar, slightly more intricate, counting arguments. We give such an argument for Case \ref{case:3b} and leave the others to the reader. Given a subset $T\subseteq [n]$ obeying the conditions of Case \ref{case:3b}, we construct a five element subset $S=\{s_{1},s_{2},s_{3},s_{4},s_{5}\}\subseteq \{2,3,\ldots,n+1\}$ determining $T$ such that $s_{3}+1<s_{4}$. Let $s_{1}$ be the start of the first block of $T$, $s_{2}$ be the start of the second gap of $T$, $s_{3}$ be the second block of $T$, $s_{4}$ be the start of the third block of $T$ and $s_{5}$ be the start of the fourth gap of $T$ (where $s_{5}=n+1$ if $T$ does not have a fourth gap). Because the third gap has size at least one, $s_{3}+1<s_{4}$. This process is reversible, as our recorded information completely determines $T$. As an example, the defining basis $T=3578\subseteq [8]$ corresponds to the set $S=34579\subseteq \{2,3,\ldots,9\}$. As there are $\binom{n-1}{5}$ five element subsets of $\{2,3,\ldots,n+1\}$ with $s_3+1<s_4$, there are also $\binom{n-1}{5}$ subsets of $[n]$ with three blocks, not containing $1$ and whose second block has size one.

 There are $\binom{n-2}{5}$, $\binom{n-1}{6}$ and $\binom{n-2}{6}$ subsets of $[n]$ satisfying Cases \ref{case:3c}, \ref{case:4a} and \ref{case:4b}, respectively. Therefore
 \begin{align*}
 \label{eq:2}
   F(n) &= \binom{n+1}{2}+ \binom{n+1}{4} + \binom{n}{5}+ \binom{n-1}{5} + \binom{n-2}{5} + \binom{n-1}{6} + \binom{n-2}{6}\\
          &= \binom{n+1}{2} + \binom{n+1}{4} + \binom{n+1}{6} + \binom{n-1}{6}
 \end{align*}
 
\end{proof}  

With this formula for the number of isomorphism classes of threshold matroids, a proof of Theorem \ref{thrm:almost} quickly follows.

\begin{repintrotheorem}{thrm:almost}
  Almost all shifted matroids are \underline{not} threshold. That is,
  \[\lim_{n\to\infty}\frac{\text{$\#$ of isomorphism classes of threshold matroids on a ground set of size $n$}}{\text{$\#$ of isomorphism classes of shifted matroids on a ground set of size $n$}}=0. \]
\end{repintrotheorem}
\begin{proof}
  By Lemma \ref{lem:iso}, the number of isomorphism classes of shifted matroids on a ground set of size $n$ is equal to $2^n-1$. On the other hand, by Theorem \ref{thrm:enumeration}, the number of isomorphism classes of shifted matroids on a ground set of size $n$ is equal to a sum of binomial coefficients. As binomial coefficients grow polynomially in $n$, our claim follows.
\end{proof}

We can also use our main theorem to give alternative proofs of the main results of Section $5$ of Deza and Onn's paper \cite{DO}. Their results are summarized in the following theorem. See Table \ref{tab:table} for a translation between the terminology used in \cite{DO} and the terminology used in this article. 

\begin{thrm}
\label{thrm:DO}
Let $M$ be a shifted matroid.
\begin{itemize}
\item (Theorem $5.4$ of \cite{DO}) If $M$ is rank three, then $M$ is threshold.
\item (Theorem $5.1$ of \cite{DO}) If $M$ is paving, then $M$ is threshold.
\item (Theorem $5.5$ of \cite{DO}) If $M$ is binary, then $M$ is threshold.
\end{itemize}
\end{thrm}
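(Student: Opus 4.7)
The plan is to reduce each of the three statements to Theorem \ref{thrm:main} by analyzing the block decomposition $T = T_1 \cdots T_\ell$ of the defining basis of $\tilde M$. Since each of the three hypotheses (rank three, paving, binary) is preserved under contracting coloops, Lemma \ref{lem:thresh_contract} lets us assume throughout that $M = \tilde M = \langle T \rangle$ is coloopless.

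The rank-three statement is immediate: $|T| \leq 3$ forces $\ell \leq 3$, and if $\ell = 3$ then each $T_i$ is a singleton, so $|T_2| = 1$ and case (iii)(a) of Theorem \ref{thrm:main} applies; otherwise case (ii) applies. For the paving statement, Lemma \ref{lem:circuits} shows that the smallest non-loop circuit of $M$ has size $|T_\ell|+1$, so the paving condition forces $|T_\ell|+1 \geq k$ and hence $\sum_{i<\ell}|T_i| \leq 1$. Since each block has size at least one, this gives $\ell \leq 2$, and case (ii) applies.

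The binary statement is the main obstacle. We argue by contrapositive. Assuming $T$ has a non-threshold block structure as characterized by Theorem \ref{thrm:main} -- either $\ell \geq 4$, or $\ell = 3$ with both the second block $T_2$ and the second gap of $T$ of size at least two -- the plan is to exhibit two circuits $C_1, C_2$ of $M$ whose symmetric difference has size exactly two and is independent in $M$, thereby contradicting the standard characterization that in a binary matroid the symmetric difference of any two circuits is a disjoint union of circuits. Both $C_1$ and $C_2$ will be produced by Lemma \ref{lem:circuits} with index $i = 2$, and will take the form $\{x\} \cup T_2 \cup T_3 \cup \cdots \cup T_\ell$ with possibly one element of $T_\ell$ modified.

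When $\ell = 3$ we produce the pair by varying $x$ across two distinct elements of the second gap of $T$ -- available by hypothesis -- and leaving $T_2 \cup T_3$ intact, so the symmetric difference consists of exactly those two gap elements. When $\ell \geq 4$ we instead fix $x$ to be the smallest element of the gap just after $T_1$ and obtain $C_2$ from $C_1$ by replacing the maximum of $T_\ell$ with the smallest element of the gap just before $T_\ell$ (when $|T_\ell|\geq 2$) or the smallest element of the gap just before $T_{\ell-1}$ (when $|T_\ell|=1$). In each subcase the verification reduces to two componentwise comparisons with $T$: one via Lemma \ref{lem:circuits} that $C_2$ is indeed a circuit, and one exhibiting an explicit basis of $M$ containing the size-two symmetric difference, establishing its independence. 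Both checks are direct consequences of the gap structure of $T$ and the componentwise ordering.
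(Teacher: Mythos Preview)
Your argument is correct, and for the rank-three and paving cases it matches the paper's (you essentially inline Lemma~\ref{lem:paving}). One minor slip: contracting a coloop lowers the rank, so ``rank three is preserved'' should read ``rank at most three''---but since $|T|\le 3$ still forces $\ell\le 3$, with $|T_2|=1$ when $\ell=3$, nothing breaks.

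For the binary statement your route genuinely differs from the paper's. The paper first proves a full characterization of binary shifted matroids (Lemma~\ref{lem:binary}): after removing coloops, $T=\{1,\dots,k\}\setminus\{\ell\}\cup\{m\}$ for some $\ell\le k<m$, hence at most two blocks. Its non-binary direction also builds two circuits with independent symmetric difference, but it takes $C_j=v_j\,T_{i_j}\cdots T_\ell$ for two gap elements $v_1<v_2$ each below at least two elements of $T$, allowing $i_1\ne i_2$ and hence a symmetric difference of arbitrary size. You instead argue the contrapositive only for non-threshold $T$ and take both circuits at index $i=2$ in Lemma~\ref{lem:circuits}, arranging $|C_1\triangle C_2|=2$; the independence check then reduces to exhibiting a single basis containing two prescribed elements, which the block/gap hypotheses (second gap of size $\ge 2$ when $\ell=3$; the extra gaps before $T_\ell$ or $T_{\ell-1}$ when $\ell\ge 4$) supply directly. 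Your argument is shorter and more self-contained for the stated goal, while the paper's buys the additional classification of Lemma~\ref{lem:binary}.
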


We start with the simplest theorem to reprove, Theorem $5.4$ of \cite{DO}. Using Theorem \ref{thrm:main}, we are able to greatly reduce the length and conceptual complexity of the proof.
\begin{prop}[Theorem $5.4$ of \cite{DO}]
Let $M=\langle T \rangle $ be a rank three shifted matroid. Then $M$ is threshold.
\end{prop}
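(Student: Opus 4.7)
The plan is to invoke Theorem \ref{thrm:main} directly and observe that the rank three assumption forces $T$ to have a block structure that falls into one of the threshold cases. Following the hypothesis of Theorem \ref{thrm:main}, I first pass to $\tilde M = \langle T' \rangle$, the shifted matroid obtained by contracting all coloops of $M$. Since contracting coloops strictly decreases the rank, $|T'| \leq 3$.

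Now I split on the number of blocks of $T'$. Since $T'$ has at most three elements, it has at most three blocks. If $T'$ has one or two blocks (including the trivial cases $|T'| \leq 2$), then Theorem \ref{thrm:main}.ii applies and $\tilde M$ is threshold. The remaining case is when $T'$ has exactly three blocks; but then, by pigeonhole, each block consists of a single element, so in particular the second block has size one. Theorem \ref{thrm:main}.iii.a then gives that $\tilde M$ is threshold.

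In either case $\tilde M$ is threshold, so by Lemma \ref{lem:thresh_contract} the matroid $M$ is threshold. No case is an obstacle here: the entire point is that rank three is too small for the obstructions identified in Theorem \ref{thrm:main}.i and Theorem \ref{thrm:main}.iii.b to appear, since those require at least four blocks or a second block of size at least two, respectively — both of which are precluded by $|T'| \leq 3$.
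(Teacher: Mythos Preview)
Your proof is correct and follows the same approach as the paper's: both argue that a defining basis with at most three elements has at most three blocks, and that in the three-block case the second block is forced to be a singleton, so Theorem~\ref{thrm:main} applies. Your version is slightly more explicit about passing to $\tilde M$ via Lemma~\ref{lem:thresh_contract}, while the paper leaves this step implicit in its appeal to Theorem~\ref{thrm:main}.
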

\begin{proof}
Since $M$ is rank three, $|T|=3$ and $T$ can have at most three blocks. If $T$ has exactly three blocks, then the second block of $T$ must have size one. In either case, Theorem \ref{thrm:main} implies that $M$ is threshold.
\end{proof}

It turns out that shifted paving matroids are a relatively small class of both shifted matroids and paving matroids. In the below lemma, we classify their block structure. We'll then use this classification to give a succinct proof of Theorem $5.1$.

\begin{lemma}
\label{lem:paving}
Let $M= \langle T \rangle$ be a shifted matroid on ground set $[n]$. Then $M$ is paving if and only if
\[T =  \ell\, m\, m+1 \cdots n \]
where $1\leq \ell < m \leq n$.
\end{lemma}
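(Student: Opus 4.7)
The plan is to characterize the circuits of $M = \langle T \rangle$ via Lemma \ref{lem:circuits} and then translate the paving condition (every circuit has size $\geq k = |T|$) into combinatorial constraints on the block decomposition $T = T_1 \cdots T_\ell$. Lemma \ref{lem:circuits} gives exactly what we need: the circuits are either loops (size $1$, existing iff the last element of $T_\ell$ is strictly less than $n$), or non-loop circuits of size $|T_i| + |T_{i+1}| + \ldots + |T_\ell| + 1$ for some $i \in \{1, \ldots, \ell\}$.

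For the forward direction, assume $M$ is paving of rank $k \geq 2$. The absence of loops of size less than $k$ forces $T_\ell$ to end at $n$. For each $i \geq 2$, the constraint $|T_i| + \ldots + |T_\ell| + 1 \geq k = |T_1| + \ldots + |T_\ell|$ rearranges to $|T_1| + \ldots + |T_{i-1}| \leq 1$. Taking $i = 2$ forces $|T_1| = 1$; taking $i = 3$ (should $\ell \geq 3$) yields $|T_1| + |T_2| \leq 1$, which is impossible since blocks are nonempty. Hence $\ell \leq 2$. If $\ell = 1$, then $T$ is a single block ending at $n$, which we can write as $\ell\, m\, m+1 \cdots n$ by letting $\ell$ be the first element of $T$ and $m = \ell + 1$. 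If $\ell = 2$, then $T_1 = \{\ell\}$ is a singleton and $T_2 = \{m, m+1, \ldots, n\}$ for some $m > \ell + 1$, matching the desired form.

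For the reverse direction, assume $T$ has the form $\ell\, m\, m+1 \cdots n$ with $\ell < m$. Since $T$ ends at $n$, Lemma \ref{lem:circuits} rules out loops. If $m = \ell + 1$, then $T$ is a single block and the only circuits have size $k + 1$. If $m > \ell + 1$, then $T$ has two blocks $\{\ell\}$ and $\{m, m+1, \ldots, n\}$, and Lemma \ref{lem:circuits} yields circuits of sizes $k + 1$ (from $i = 1$) and $|T_2| + 1 = k$ (from $i = 2$). In both cases every circuit has size $\geq k$, so $M$ is paving.

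The argument is essentially a direct translation between circuit-size constraints and block sizes, so the only nuisance is bookkeeping: verifying that the two sub-cases of the form $\ell\, m\, m+1 \cdots n$ (namely $m = \ell + 1$ and $m > \ell + 1$) exhaust the permissible block structures, and handling the implicit convention that $M$ has rank at least $2$ (for rank $\leq 1$ the statement is either vacuous or requires a degenerate reading of the form). No conceptually difficult step appears once Lemma \ref{lem:circuits} is in hand.
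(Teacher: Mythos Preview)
Your proof is correct and follows essentially the same route as the paper: both arguments invoke Lemma \ref{lem:circuits} to read off the possible circuit sizes and then impose the paving constraint. The paper compresses everything into the single observation that the smallest circuit of $M$ has size $1+|T_\ell|$ (where $T_\ell$ is the last block), and that this is at least $k$ exactly when $T$ has the stated form; your version unpacks this into the separate loop/no-loop and block-count considerations, and you are right to flag the rank~$\leq 1$ edge case, which the lemma's phrasing does not literally cover.
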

\begin{proof}
Recall that $M$ is paving if all circuits of $M$ have size at least the rank of $M$. By Lemma \ref{lem:circuits}, the smallest circuit of $M$ has size equal to $1+|T_\ell|$ where $T_\ell$ is the last block of $T$. This is at least the rank of $M$ exactly when $T$ has the claimed form.
\end{proof}

We now prove Theorem $5.1$ of \cite{DO} using both Theorem \ref{thrm:main} and Lemma \ref{lem:paving}. Once again, this proof turns out to be very conceptually simple. 

\begin{prop}[Theorem $5.4$ of \cite{DO}]
  \label{prop:easy}
Let $M=\langle T \rangle $ be a paving shifted matroid. Then $M$ is threshold.
\end{prop}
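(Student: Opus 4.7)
The plan is to combine Lemma~\ref{lem:paving} with part~(ii) of Theorem~\ref{thrm:main}. By Lemma~\ref{lem:paving}, the defining basis of a paving shifted matroid has the form $T = \ell\, m\, m{+}1 \cdots n$ with $1 \leq \ell < m \leq n$. Inspection of this form shows that $T$ consists of at most two blocks: one block $\{\ell, \ell{+}1, \ldots, n\}$ when $m = \ell + 1$, and two blocks $\{\ell\}$ and $\{m, m{+}1, \ldots, n\}$ when $m > \ell + 1$.

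Next I would argue that passing from $M$ to $\tilde{M}$ by contracting coloops does not increase the number of blocks of the defining basis. In a shifted matroid $\langle T \rangle$ with $T = t_1 < t_2 < \cdots < t_k$, the coloops are exactly the elements $t_i$ satisfying $t_i = i$, and these form an initial consecutive segment $1, 2, \ldots, j$ of $T$ (where $j$ is the largest index with $t_j = j$, or $j = 0$ if no such index exists). Contracting these coloops corresponds to removing this initial segment from $T$ and relabeling the remaining ground set; since we are deleting a consecutive prefix of both the ground set and $T$, the resulting $\tilde{T}$ retains at most as many blocks as $T$. In particular, $\tilde{T}$ still has at most two blocks.

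Applying Theorem~\ref{thrm:main}(ii) to $\tilde{M} = \langle \tilde{T} \rangle$ then yields that $\tilde{M}$ is threshold, and by Lemma~\ref{lem:thresh_contract} this is equivalent to $M$ being threshold. (The degenerate subcase $T = 1, 2, \ldots, n$, in which every element is a coloop, gives a trivial matroid whose thresholdness is immediate.)

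The substantive content has all been offloaded to Lemma~\ref{lem:paving} and Theorem~\ref{thrm:main}(ii), so there is no real obstacle; the only minor point requiring care is the book-keeping about coloops and the block structure, which is what I sketched above.
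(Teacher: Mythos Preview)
Your proof is correct and follows essentially the same route as the paper: invoke Lemma~\ref{lem:paving} to see that $T$ has at most two blocks, then apply Theorem~\ref{thrm:main}(ii). The only difference is that you add a paragraph of book-keeping about contracting coloops; this is unnecessary, since the paper's proof of part~(ii) (restated as Theorem~\hyperref[thrm:main_ii]{\ref*{thrm:main}.\ref*{thrm:main_ii}}) does not assume colooplessness and applies directly to any shifted $M=\langle T\rangle$ whose $T$ has two or fewer blocks.
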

\begin{proof}
By Lemma \ref{lem:paving}, $T$ can have at most two blocks. Hence, by Theorem \ref{thrm:main}, $M$ is threshold.
\end{proof}

Our strategy for proving Theorem $5.5$ of \cite{DO} is identical to that of our proof of Proposition \ref{prop:easy}. We will first characterize the block structure of binary matroids in Lemma \ref{lem:binary}. We will then use this to deduce that all binary shifted matroids are threshold. For the sake of brevity, we omit some of the computations in the proof of Lemma \ref{lem:binary}.

\begin{lemma}
\label{lem:binary}
Let $M= \langle T \rangle$ be a shifted matroid on ground set $[n]$. Then $M$ is binary if and only if $M$ is Boolean or
\[T  = 1\,2\cdots \ell-1\, \hat \ell\, \ell+1\cdots k\, m  \]
where $1\leq \ell \leq k < m \leq n$.
\end{lemma}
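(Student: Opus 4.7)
The argument rests on a reformulation. Writing $T = t_1 < t_2 < \cdots < t_k$, I claim that the two cases on the right-hand side, namely $M$ Boolean ($T = [k]$) and $T = 1\,2\cdots(\ell-1)\,\hat\ell\,(\ell+1)\cdots k\,m$ with $m > k$, are together equivalent to the single condition $t_{k-1} \leq k$. Since $t_1 < \cdots < t_i$ are distinct elements of $[n]$, we always have $t_i \geq i$; if $t_{k-1} \leq k$, then $\{t_1,\ldots,t_{k-1}\}$ is a $(k-1)$-subset of $[k]$, hence $[k] \setminus \{\ell\}$ for a unique $\ell \in [k]$, and $T$ is either $[k]$ (when $t_k \leq k$) or $[k] \setminus \{\ell\} \cup \{m\}$ with $m = t_k > k$. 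The plan is to prove $M$ is binary if and only if $t_{k-1} \leq k$.

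For the ``if'' direction, the Boolean case is trivial, so I assume $T = [k] \setminus \{\ell\} \cup \{m\}$ with $\ell \leq k < m \leq n$ and produce an $\mathbb{F}_2$-representation. The elements $1,\dots,\ell-1$ are coloops, so after contracting them (which preserves binariness) the matroid becomes a rank-$(k-\ell+1)$ shifted matroid on $\{\ell,\dots,n\}$ whose bases, directly from the condition $s_i \leq t_i$, are $\{\ell,\dots,k\}$ together with $(\{\ell,\dots,k\}\setminus\{j\}) \cup \{q\}$ for $j \in \{\ell,\dots,k\}$ and $q \in \{k+1,\dots,m\}$. Assigning the standard basis vectors $f_1,\dots,f_{k-\ell+1}$ of $\mathbb{F}_2^{k-\ell+1}$ to the elements $\ell,\ell+1,\dots,k$, the all-ones vector $f_1+\cdots+f_{k-\ell+1}$ to each of $k+1,\dots,m$, and the zero vector to $m+1,\dots,n$, a direct check shows the independent $(k-\ell+1)$-subsets coincide with these bases. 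Adjoining the coloops as fresh coordinates produces a binary representation of $M$.

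For the ``only if'' direction, suppose $t_{k-1} \geq k+1$; I will exhibit a $U_{2,4}$ minor. Since $\{1,2,\dots,k\} \prec T$ by $t_i \geq i$, the subset $\{1,\dots,k-2\}$ is independent, being contained in the basis $\{1,\dots,k\}$. Contracting it yields a rank-$2$ shifted matroid on $\{k-1,k,\dots,n\}$ whose bases are exactly the pairs $(s_1,s_2)$ with $s_1 \leq t_{k-1}$ and $s_2 \leq t_k$, verified from the basis condition. Restricting further to $\{k-1,k,k+1,k+2\}$, which lies in the ground set because $t_k \geq t_{k-1}+1 \geq k+2$, every pair from this four-element set satisfies $s_1 \leq k+1 \leq t_{k-1}$ and $s_2 \leq k+2 \leq t_k$, so all six pairs are bases. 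This minor is $U_{2,4}$, contradicting the assumption that $M$ is binary.

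The main obstacle is identifying the correct uniform condition on $T$ that captures both the Boolean case and the parametric form simultaneously. Once $t_{k-1} \leq k$ is recognized as this condition, each direction becomes a short calculation: the ``if'' direction is a verification that the proposed $\mathbb{F}_2$-representation has the right independent sets, and the ``only if'' direction exhibits a single explicit $U_{2,4}$ minor using only the definition of a shifted matroid's bases.
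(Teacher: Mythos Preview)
Your proof is correct and takes a genuinely different route from the paper's. The paper argues both directions via the circuit characterization of binary matroids (the symmetric difference of any two circuits contains a circuit), invoking Lemma~\ref{lem:circuits} to describe circuits of $\langle T\rangle$ explicitly: for the ``if'' direction it asks the reader to check that all symmetric differences of circuit pairs contain a circuit, and for ``only if'' it produces two specific circuits $C_1,C_2$ whose symmetric difference is independent. Your argument instead gives a concrete $\mathbb{F}_2$-representation for the ``if'' direction and an explicit $U_{2,4}$ minor for ``only if'', never touching Lemma~\ref{lem:circuits}. Your reformulation $t_{k-1}\le k$ is a clean unifying observation the paper does not make, and it lets you locate the $U_{2,4}$ minor in one line by contracting $\{1,\dots,k-2\}$ and restricting to $\{k-1,k,k+1,k+2\}$. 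The trade-off is that the paper's approach stays within its established toolkit (the circuit lemma it uses repeatedly elsewhere), while yours is more self-contained and arguably more transparent, since both the representation and the forbidden minor are exhibited rather than left as checks. One very minor point: your reformulation implicitly assumes $k\ge 2$ so that $t_{k-1}$ is defined; the rank-one case is trivially binary and fits the stated form with $\ell=k=1$, so this causes no real gap.
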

\begin{proof}
 A characterization of binary matroids is that $M$ is binary if and only if the symmetric difference of every pair of circuits of $M$ contains a circuit; see \cite[Theorem $9.1.2$]{oxley}. The Boolean matroid has no circuits and therefore is binary. We now focus on the second case of our lemma. Using Lemma \ref{lem:circuits}, one can explicitly write down the circuits of $M=\langle T \rangle$. It can then be checked that the symmetric difference of pairs of these circuits always contains another circuit.

In the other direction, suppose $M=\langle T\rangle$ is not Boolean or of our specified form. Let $T=T_1T_2\cdots T_\ell$ be the block decomposition of $T$. Then there are two elements $v_1<v_2$ contained in the complement of $T$ and which are smaller than at least two elements of $T$. Let $T_{i_1}$ and $T_{i_2}$ be the first blocks of $T$ after $v_1$ and $v_2$, respectively. Using Lemma \ref{lem:circuits}, one can check that 
\begin{gather*}
C_1 = v_1 T_{i_1} T_{i_1+1} \cdots T_\ell\\
C_2 = v_2 T_{i_2} T_{i_2+1} \cdots T_\ell
\end{gather*}
are circuits of $M$. Finally, using that $v_2$ is smaller than at least two elements of $T$ and is the largest element of the symmetric difference of $C_1$ and $C_2$, one can check that the symmetric difference of $C_1$ and $C_2$ is independent. This computation will prove that $M$ is not binary.
\end{proof}
\begin{prop}[Theorem $5.4$ of \cite{DO}]
Let $M=\langle T \rangle $ be a binary shifted matroid. Then $M$ is threshold.
\end{prop}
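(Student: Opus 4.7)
The plan is to combine Lemma \ref{lem:binary} with our characterization of thresholdness in Theorem \ref{thrm:main}. Lemma \ref{lem:binary} is enormously restrictive: either $M$ is Boolean, or
\[ T \,=\, 1\,2\cdots (\ell-1)\,(\ell+1)\cdots k\,m, \qquad 1\leq \ell \leq k < m \leq n.\]
The Boolean case is immediate, since then $T$ (after contracting coloops if desired) is a single block and Theorem \hyperref[thrm:main_ii]{2.\ref*{thrm:main_ii}} applies. So the substance of the proof is the second case.

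In that case I would first locate the coloops of $M$. For any basis $B=b_1<\cdots<b_k$ of $\langle T\rangle$, the condition $B\prec T$ forces $b_i\le T(i)=i$ for $i=1,\dots,\ell-1$, so inductively $b_i=i$ in that range. This shows $\{1,2,\dots,\ell-1\}$ consists entirely of coloops (and in fact these are the only ones, though we do not need that). Contracting them yields a shifted matroid $\tilde M=\langle \tilde T\rangle$ on the ground set $\{\ell,\ell+1,\dots,n\}$ with defining basis
\[ \tilde T \,=\, (\ell+1)(\ell+2)\cdots k\,m.\]

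The key observation is that $\tilde T$ has at most two blocks: its first block is $\{\ell+1,\dots,k\}$ (empty when $\ell=k$) and its second block is $\{m\}$, the two merging into a single block precisely when $m=k+1$. Hence Theorem \hyperref[thrm:main_ii]{2.\ref*{thrm:main_ii}} applies to $\tilde M$, and Lemma \ref{lem:thresh_contract} lifts the resulting thresholdness of $\tilde M$ back to $M$.

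There is no real obstacle here: the conceptual work has been absorbed into Lemma \ref{lem:binary}, which pins down the extremely rigid block pattern that binarity imposes, and into Theorem \ref{thrm:main}, which certifies thresholdness from the block data of the contracted defining basis. The only care required is bookkeeping at the boundary values $\ell=1$ (no coloops to contract), $\ell=k$ (the middle run is empty), and $m=k+1$ (the two remaining blocks merge), all of which are uniformly handled by the at-most-two-blocks bound above.
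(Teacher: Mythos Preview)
Your proof is correct and follows essentially the same route as the paper: invoke Lemma~\ref{lem:binary}, contract the initial run of coloops, observe that the remaining defining basis has at most two blocks, and apply Theorem~\hyperref[thrm:main_ii]{\ref*{thrm:main}.\ref*{thrm:main_ii}}. The paper's version is simply terser---it asserts in one line that ``after contracting all coloops of $M$, $T$ can have at most two blocks''---whereas you spell out explicitly why $1,\dots,\ell-1$ are the coloops and handle the boundary cases $\ell=1$, $\ell=k$, $m=k+1$.
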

\begin{proof}
By Lemma \ref{lem:binary}, after contracting all coloops of $M$, $T$ can have at most two blocks. By Theorem \ref{thrm:main}, this means that $M$ is threshold.
\end{proof}

We conclude by giving a polynomial-time algorithm to check if a matroid $M$ is threshold. Suppose that $M$ is a rank $k$ matroid on a ground set $E$ of size $n$ with $m$ bases and $M$ is presented by a list of its bases. In Section $8$ of \cite{DO}, they show that there exists an algorithm that terminates in $O(m^{2}k^{2})$ time to determine if $M$ is shifted. They also give an algorithm to determine if $M$ is threshold. However, as part of their algorithm, they solve a linear program with $\binom{n}{k}$ many variables. Solving this linear program takes time polynomial in $\binom{n}{k}$. When $n$ and $k$ are both variable, $\binom{n}{k}$ grows exponentially in $k$. Therefore, their algorithm is not polynomial in $n$, $k$ and $m$. For this reason, they deem their algorithm non-viable for practical computations. For a fixed choice of $n$ and $k$, $m$ can be as large as $\binom{n}{k}$. However, in practice, $m$ is often much smaller than $\binom{n}{k}$. Therefore, it is desirable to find an algorithm, polynomial in $n$, $k$, and $m$, to check if $M$ is threshold. A key feature of such an algorithm is that, once $m$ is fixed, the algorithm takes polynomial time in $n$ and $k$. This feature is not present in the algorithm of Deza and Onn \cite[Section $8$]{DO}. Theorem \ref{thrm:polynomial} exhibits an algorithm, polynomial in $n$, $m$ and $k$, to decide if $M$ is threshold. We remind the reader that Table \ref{tab:table} provides a translation between the terminology used in \cite{DO} and the terminology used in this article. 

\begin{thrm}
\label{thrm:polynomial}
Suppose $M$ is a rank $k$ matroid on a ground set $E$ of size $n$ with $m$ bases and $M$ is presented by its list of bases. There is an algorithm that terminates in $O(m^{2}k^{2}+n\log(n))$ time to decide if $M$ is a threshold matroid.
\end{thrm}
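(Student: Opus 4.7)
The plan is to reduce the test of thresholdness to an inspection of the block structure of the defining basis, via Theorem \ref{thrm:main}. Since every threshold matroid is shifted by Lemma \ref{lem:shift->thresh}, we first run Deza and Onn's $O(m^2 k^2)$ algorithm \cite[Section $8$]{DO} to verify that $M$ is shifted; if it is not, we return ``no''.

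Assuming $M$ has been verified shifted, we must recover the defining basis $T$ relative to some valid ordering of $E$. For each $e \in E$ compute $c(e) := |\{B \in M : e \in B\}|$ by one pass through the bases, in $O(mk)$ time. Sort $E$ in decreasing order of $c(e)$ with arbitrary tie-breaking, at cost $O(n \log n)$. This sorted order is a valid shifted ordering: if $c(x) > c(y)$, the shift axiom produces an injection from bases containing $y$ but not $x$ into bases containing $x$ but not $y$ via $B \mapsto (B \setminus y) \cup x$, so $x$ must precede $y$ in every shifted ordering; if $c(x) = c(y)$ this injection is a bijection, and swapping $x$ and $y$ therefore preserves the basis structure, so either tie-break is legal. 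Under the sorted order, compute the componentwise maximum basis $T$ by $T_i := \max_{B \in M} B_i$, where $B = (b_1 < \cdots < b_k)$; this costs $O(mk)$, and Klivans' theorem ensures that $T$ is a basis generating $M$. Identify the coloops of $M$ as the elements with $c(e) = m$ (they form the maximal prefix of $T$ of the form $1, 2, \ldots, c$), delete them to obtain $\tilde T$, compute the block decomposition of $\tilde T$ in $O(k)$ time, and apply Theorem \ref{thrm:main}: $M$ is threshold iff $\tilde T$ has at most two blocks, or has three blocks in which the second block or the second gap has size one.

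Summing the individual costs yields total running time $O(m^2 k^2 + mk + n \log n + k) = O(m^2 k^2 + n \log n)$, matching the claim. The main technical content is justifying that sorting by $c$ with arbitrary tie-breaking yields a valid shifted ordering; once this is in hand every subsequent step is a direct translation of a combinatorial feature of $\tilde T$ into a polynomial-time computation, and the block structure of $\tilde T$ (hence the decision returned) is independent of the tie-breaks.
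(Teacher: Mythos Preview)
Your proof is correct and follows the same overall architecture as the paper: verify shiftedness via Deza--Onn's $O(m^2k^2)$ test, recover a valid total ordering of $E$, extract the defining basis $T$ as the componentwise maximum, and then apply Theorem~\ref{thrm:main} to the block structure of $\tilde T$.

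The one substantive difference is in how the ordering is recovered. The paper computes the open and closed neighborhoods $N(i)$ and $N[i]$ and sorts $E$ by the vicinal preorder, appealing to Klivans' characterization (Theorem~\ref{thrm:vicinal}). You instead sort by the basis count $c(e)=|\{B\in M:e\in B\}|$ and argue directly: in any valid shifted order the map $B\mapsto (B\setminus y)\cup x$ injects bases through $y$ into bases through $x$, so $c$ is weakly decreasing along that order; and when $c(x)=c(y)$ the injection is a bijection, making the transposition $(x\,y)$ an automorphism, so arbitrary tie-breaking still yields a valid order. Your route is more elementary and entirely self-contained---it sidesteps the vicinal machinery and the comparison step in the sort is a single integer comparison rather than a set containment. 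The paper's route has the advantage of plugging directly into an existing structural theorem. Both lead to the same $T$ (Lemma~\ref{lem:iso} guarantees uniqueness of $T$ within the isomorphism class, so the tie-breaks cannot affect the block structure) and to the same $O(m^2k^2+n\log n)$ bound.
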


If $M$ were presented to us as a principal order ideal generated by some $T\in \binom{[n]}{k}$, then, by using Theorem \ref{thrm:main} and checking the block structure of $T$, we could easily check if $M$ is threshold in $O(k)$ time. Thus, the main content of Theorem \ref{thrm:polynomial} is the construction of an algorithm to find an isomorphism between $M$, an arbitrary shifted matroid, and $\tilde M = \left\langle T \right\rangle$, a principal order ideal in the component-wise partial order of $\binom{[n]}{k}$. Recall that Theorem \ref{thrm:vicinal} lets us find such an isomorphism, provided that we can compute the vicinal preorder of $M$ on $E$.

Thus, our algorithm will proceed as follows:
We first check if $M$ is shifted. If $M$ is indeed shifted, we then compute the vicinal preorder of $M$ on $E$ to determine an isomorphism from $M$ to $\tilde M=\langle T \rangle$. We conclude by checking the block structure of $T$ to determine if $M$ is threshold. We verify that this procedure takes polynomial time in the following proof.

\vspace{1em}

\noindent \emph{Proof of Theorem \ref{thrm:polynomial}.} As remarked, Deza and Onn \cite[Section $8$]{DO} provide an algorithm that checks if $M$ is shifted in $O(m^2k^2)$ time. Thus, we can now assume that $M$ is a shifted matroid. With this in mind, we will now compute the total vicinal ordering of $E$. We can calculate all of the sets $N(i)$ and $N[i]$ in polynomial time. To do so, iterate over every $B\in M$ and $i\in B$, then add $B\setminus i$ to the sets $N(i)$ and $N[j]$ whenever $j\in B$. As there are $m$ bases and $k$ elements in each basis, this process will take $O(mk)$ time. Sorting $E$ to determine a bijection between $M$ and a shifted matroid $\tilde M= \langle T \rangle$ on $[n]$ will take $O(n\log(n))$ time. To determine the defining basis $T$, we need to compare the bases of $\tilde M$ at every index. This will take $O(mk)$ time. Lastly, we can check the block structure of $T$ in $O(k)$ time to determine if $M$ is threshold. As all of these steps are polynomial, our algorithm is polynomial and has time complexity $O(m^{2}k^{2}+n\log(n))$.

\printbibliography
\end{document}